\numberwithin{equation}{section}
\theoremstyle{plain}
\newtheorem{theorem}{Theorem}
\newtheorem{lemma}[theorem]{Lemma}
\newtheorem{question}[theorem]{Question}
\theoremstyle{definition}
\newtheorem{remark}[theorem]{Remark}
\newtheorem*{remark*}{Remark}
\begin{document}
\title[sharp isoperimetric inequalities]{Sharp isoperimetric inequalities for infinite plane graphs with bounded vertex and face degrees}

\author[B. Oh]{Byung-Geun Oh}
\address{Department of Mathematics Education, Hanyang University, 222 Wangsimni-ro, Seongdong-gu, Seoul 04763, Korea}
\email{bgoh@hanyang.ac.kr}

\date{\today}
\subjclass[2010]{Primary 05C10, 05C63, 52C20.}

\begin{abstract}
We provide sharp bounds for the isoperimetric constants of infinite plane graphs (tessellations) with bounded vertex and face degrees. For example, if
 $G$ is a plane graph satisfying the inequalities $p_1 \leq \deg v \leq p_2$ for $v \in V(G)$ and $q_1 \leq \deg f \leq q_2$ for $f \in F(G)$, where
$p_1, p_2, q_1$, and $q_2$ are natural numbers  such that $1/p_i + 1/q_i  \leq 1/2$, $i=1,2$, then we show that 
\[
\Phi (p_1, q_1) \leq \inf_S \frac{|\partial S|}{|V(S)|} \leq \Phi (p_2, q_2), 
\]
where the infimum is taken over all finite nonempty subgraphs $S \subset G$, $\partial S$ is the set of edges connecting $S$ to $G \setminus S$, and $\Phi(p,q)$ is defined by
\[
\Phi (p, q) = (p-2) \sqrt{1 - \frac{4}{(p-2)(q-2)}}.
\]
For $p_1=3$ this gives an affirmative answer to a conjecture by Lawrencenko, Plummer, and Zha  from 2002, and for general $p_i$ and $q_i$
our result fully resolves a question posed in the book by Lyons and Peres from 2016, where they extended the conjecture of Lawrencenko et al.\ to the above form.
\end{abstract}

\maketitle

\section{Introduction}
Graphs considered in this paper are infinite tessellations of the plane, on which we study geometric and topological properties of the graph and prove sharp isoperimetric inequalities.
In particular, we provide an affirmative answer to Question~6.21 in the book  by Lyons and Peres \cite{LP16} as well as Conjecture~1.1 by  Lawrencenko, Plummer, and Zha  \cite{LPZ}.
The main tool  is the combinatorial Gauss-Bonnet theorem involving left turns (geodesic curvature), and we have obtained the results 
by interpreting left turns of the boundary curves combinatorially. The background and motivation of our research is the following.

Suppose $\mathcal{M}$ is a complete simply connected  open 2-dimensional  Riemannian manifold whose Gaussian curvature is bounded above by $k <0$. 
Then one can prove, using the famous Cartan-Hadamard theorem, that 
\begin{equation}\label{E:isoM}
\inf_\Omega \left\{ \frac{\mbox{length}(b \Omega)}{\mbox{area}(\Omega)} \right\} \geq  \sqrt{-k},
\end{equation}
where the infimum is taken over all finite regions  $\Omega \subset \mathcal{M}$ with smooth boundaries $b \Omega$  (cf.\ \cite[Theorem 34.2.6]{BZ}).
The constant (infimum) on the left hand side of \eqref{E:isoM} is  called the \emph{isoperimetric constant} of $\mathcal{M}$, 
or  the \emph{Cheeger constant}  named after Jeff Cheeger \cite{Che}, and is known to be equal to $\sqrt{-k}$ for the hyperbolic plane of constant curvature $k<0$.

It might be natural to expect a similar phenomenon for discrete cases. Let us choose a plane graph $G$ as a discrete analogue 
of a 2-dimensional  Riemannian manifold, and note that curvature on a plane graph is usually described in terms of vertex and face degrees. (See Section~\ref{prelim}
for notation and terminology,  and Section~\ref{S:CGB} for the concept and properties of combinatorial curvatures.)
Thus a counterpart to a simply connected 2-dimensional  Riemannian manifold of \emph{constant} Gaussian curvature is a $(p,q)$\emph{-regular graph} $G$,
a tessellation of the plane with $\deg v = p$ for all $v \in V = V(G)$ and $\deg f = q$ for all $f \in F = F(G)$, where $p$ and $q$ are natural numbers greater than or equal to $3$,
$V$ and $F$ are the vertex and face sets of $G$, respectively, and $\deg a$ denotes the degree of $a \in V\cup F$ (the number of edges incident to $a$).
In this setting Riemannian manifolds whose curvatures are bounded above would correspond to tessellations 
of the plane with vertex and face degrees bounded below, say by $p$ and $q$, respectively.

There are several options for a discrete analogue to the isoperimetric constant.  
Let $S$ be a subgraph of a given tessellation $G$, and we define the \emph{edge boundary} $\partial S$ of $S$ as  the set of edges in $E = E(G)$ with one end on 
$V(S)$  and the other end on $V \setminus V(S)$, where $E$ is the edge set of $G$. Also we define the \emph{boundary walk} $bS$ of $S$ 
as the sequence of  edges  traversed by those who walk along the topological boundary of $S$ in the positive direction. See Section~\ref{prelim} for the precise definition of $bS$,
but at this point one may think of it as the set of edges in $E(S)$ that are included in the boundaries of faces in $F \setminus F(S)$. 
Now the \emph{isoperimetric constants}, which are also known as the \emph{Cheeger constants} as in the continuous case, of $G$ are defined by the formulae
\begin{equation}\label{E:isoconst}
\begin{aligned}
& \imath (G) = \inf_S \frac{|\partial S|}{|V(S)|}, \qquad  && \imath^{*} (G) = \inf_S \frac{|bS|}{|F(S)|}, \\
& \imath_\sigma (G) = \inf_S \frac{|\partial S|}{\sum_{v \in V(S)} \deg v}, \qquad  && \imath_\sigma^{*} (G) = \inf_S \frac{|bS|}{\sum_{f \in F(S)} \deg f},
\end{aligned}
\end{equation}
where the infima are taken over all finite nonempty subgraphs $S \subset G$ 
and $|\cdot|$ is the cardinality of the given set. Here it is also assumed that $F(S) \ne \emptyset$ for $\imath^{*} (G)$ and $\imath_\sigma^{*} (G)$.

Isoperimetric constants have been studied extensively in graph theory, because they are related to many important properties of given structures and
therefore have numerous applications. See for instance \cite{BKW, Moh89} and the references therein. The constants $\imath(G)$ and $\imath_\sigma (G)$ are 
the most common isoperimetric constants, because they can be defined for every type of graphs including non-planar graphs, disconnected graphs, and
finite graphs (with some modification in the definition), and more importantly, because they have many applications in spectral theory on graphs, simple random walks, etc. For example,
$\imath(G)$  and $\imath_\sigma (G)$ are used to bound the bottom of the spectrum of the negative combinatorial Laplacian 
\cite{DK, Doz, DKarp,  Fuj,  Kel11, Kel17, KP11, KN, Moh88, Moh91}. These constants 
also appear in various other contexts \cite{ABH18, BKW,  BMS, Bol, HJL, HP20, Hig, HS, Kel10, KM, LevPer, LP16, Moh89,  Moh92, Moh02, Nic, Oh14, OS16, PRT, Woe98, Zuk} (and more).
For  $\imath^*(G)$ and $\imath^*_\sigma (G)$, they are defined by imitating the isoperimetric constants on Riemannian manifolds, 
hence these constants logically make sense only for graphs embedded into 2-dimensional manifolds. 
The constants  $\imath^*(G)$ or $\imath^*_\sigma (G)$ are found in \cite{Hig, HS, LPZ, Moh92, Oh15-2, OS16}, and positivity of these constants implies Gromov hyperbolicity 
of the graph, provided that face degrees are bounded \cite{Oh14, Oh15-2}. Note that $\imath^*(G)$ and $\imath^*_\sigma (G)$ are
the \emph{duals} of $\imath(G)$ and $\imath_\sigma(G)$, respectively; i.e., we have  $\imath (G) = \imath^* (G^*)$ and  $\imath_\sigma (G) = \imath_\sigma^* (G^*)$,
where $G^*$ is the dual graph of $G$.

Now let us go back to the problem considered at the beginning, and suppose that $G$ is a $(p,q)$-regular graph with $p, q \geq 3$.
If $1/p + 1/q > 1/2$, then $G$ is  in fact a finite graph and a tessellation of the Riemann sphere, which is not what we are studying in this paper. 
Anyway in this case $G$ becomes one of the platonic solids; i.e., G is one of the tetrahedron, the octahedron, the icosahedron, the cube, or the dodecahedron (Figure~\ref{F:platonic}).
\begin{figure}[t]
\centerline{
\hfill 
\subfigure{\epsfig{figure=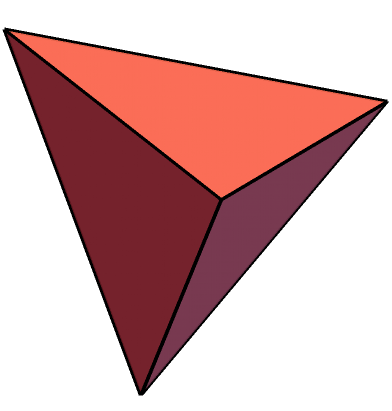, width=.7 in, height=.7 in}} \hfill
\subfigure{\epsfig{figure=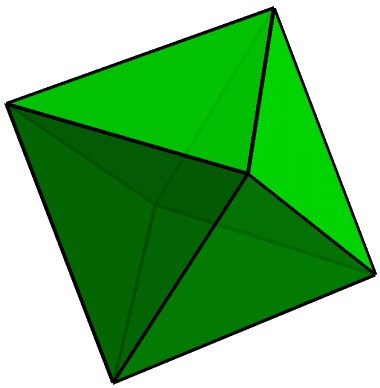, width=.7 in, height=.7 in}} \hfill
\subfigure{\epsfig{figure=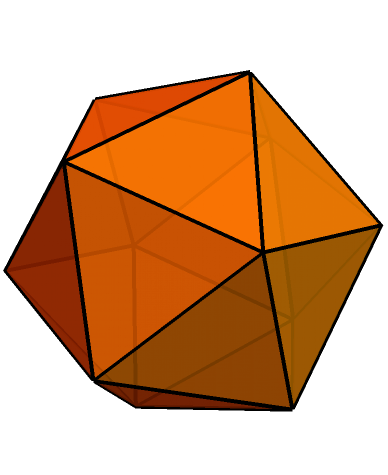, width=.7 in, height=.85 in}} \hfill
\subfigure{\epsfig{figure=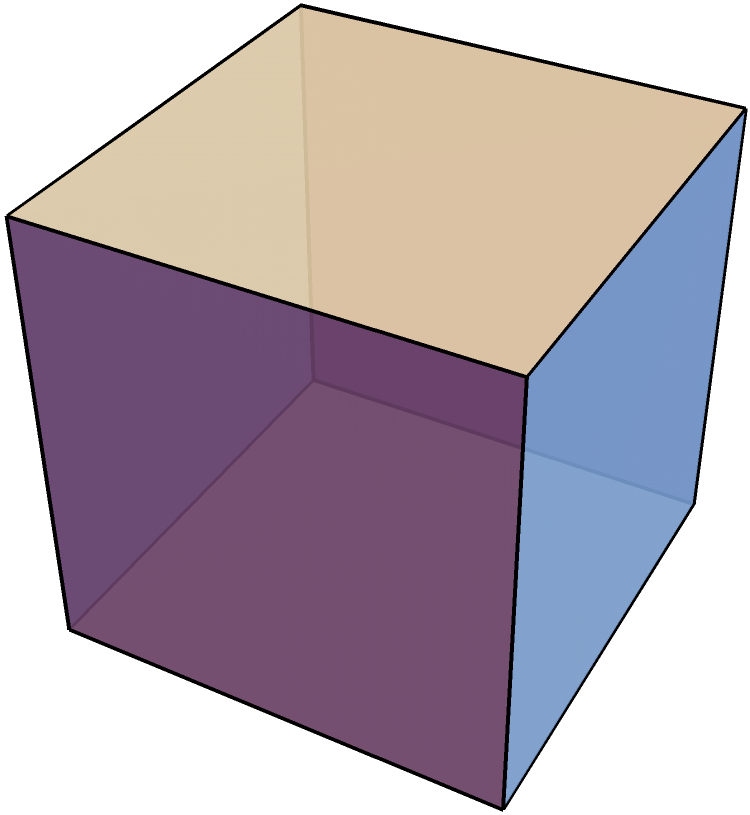, width=.7 in, height=.7 in}} \hfill 
\subfigure{\epsfig{figure=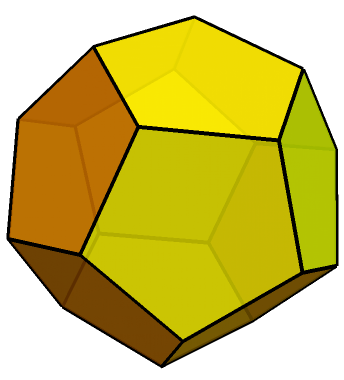, width=.67 in, height=.72 in}} \hfill}
\caption{Platonic solids}\label{F:platonic}
\end{figure}
The case $1/p + 1/q = 1/2$ occurs if and only if $(p,q) \in \{ (6,3), (4,4), (3,6) \}$, hence in this case $G$ is infinite and becomes one of the regular tilings of the plane; 
i.e., $G$ is the $6$-regular triangulation of the plane if $(p,q) = (6,3)$, the square lattice if $(p,q) =(4,4)$, and the hexagonal honeycomb if $(p,q) =(3,6)$ (Figure~\ref{F:grid}).
Graphs in this category correspond to simply connected 2-dimensional Riemannian manifolds of constant zero curvature (i.e., the Euclidean plane), 
and have zero isoperimetric constants in \eqref{E:isoconst}.
\begin{figure}[t]
\centerline{
\hfill 
\subfigure{\epsfig{figure=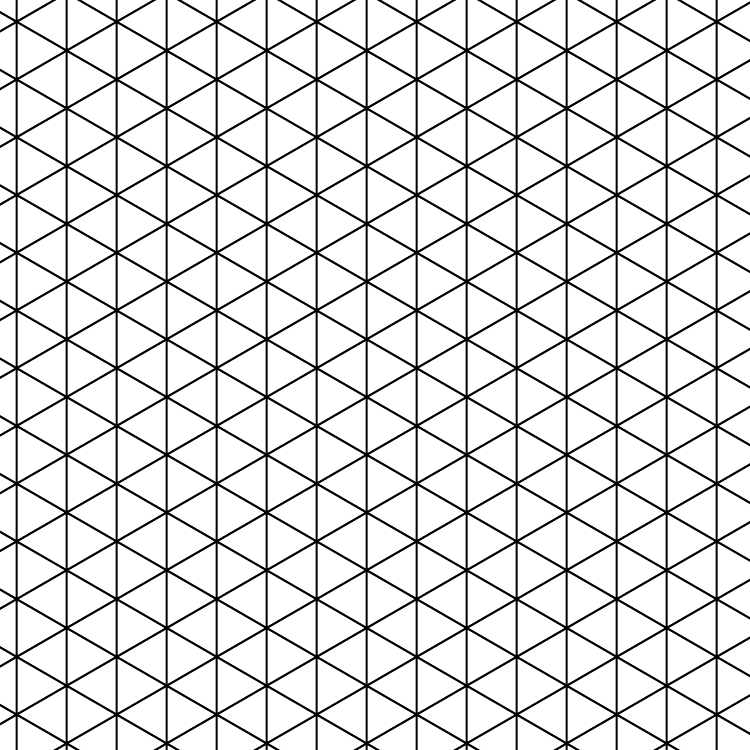, width=1.3 in}} \hfill
\subfigure{\epsfig{figure=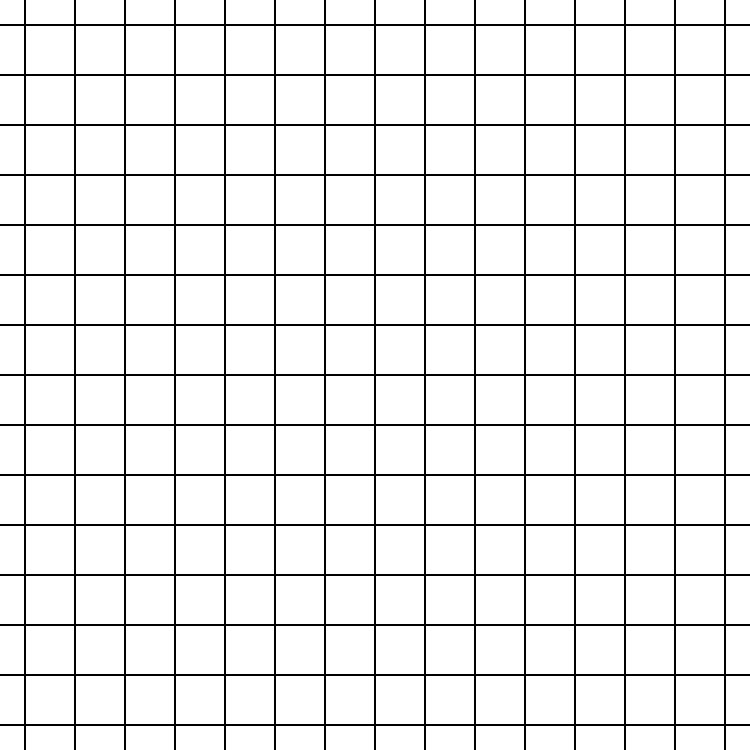, width=1.3 in}} \hfill
\subfigure{\epsfig{figure=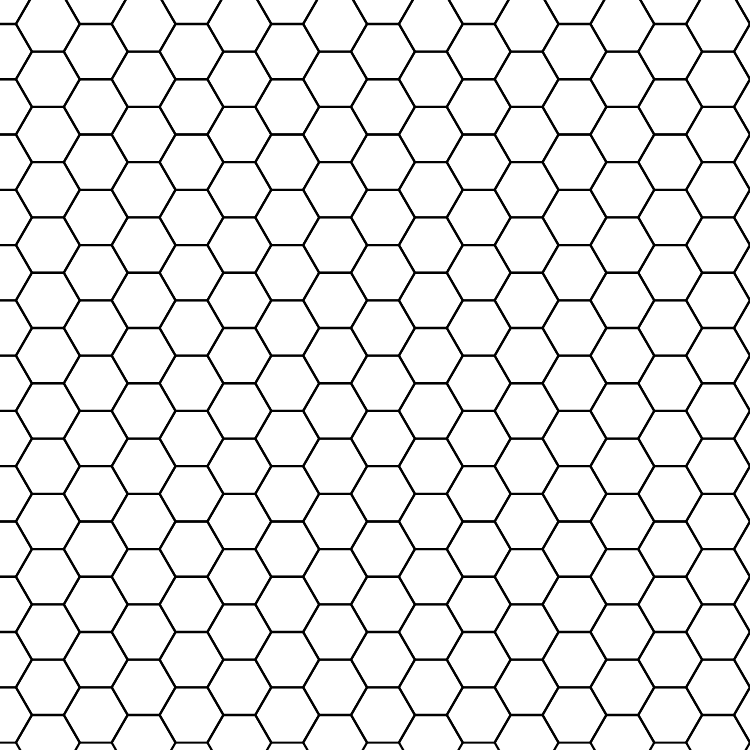, width=1.3 in}} \hfill}
\caption{Regular tilings of the plane}\label{F:grid}
\end{figure}

The case we are mostly interested in is when $1/p + 1/q < 1/2$; in this case the $(p, q)$-regular graph corresponds to a hyperbolic plane of constant negative curvature, and 
the isoperimetric constants in \eqref{E:isoconst} are positive \cite{Hig, Woe98, Zuk}. Moreover, the precise isoperimetric constants of $(p, q)$-regular graphs are computed independently by
two groups of researchers, H\"aggstr\"om,  Jonasson, and Lyons \cite{HJL} and Higuchi and Shirai \cite{HS}, and their results can be summarized as follows.

\begin{theorem}\label{T:PQreg}
Suppose $G$ is a $(p,q)$-regular graph for some integers $p, q \geq 3$ satisfying $1/p + 1/q \leq 1/2$. Then we have
\[
\begin{aligned}
\imath (G) & = \Phi (p, q)  \quad & \imath^{*} (G) & = \Phi (q, p) , \\
\imath_\sigma (G) & = \Phi (p, q)/p, \quad & \imath_\sigma^{*} (G) & = \Phi (q, p)/q,
\end{aligned}
\]
where
\[
\Phi (a, b)  = (a-2) \sqrt{1 - \frac{4}{(a-2)(b-2)}}.
\]
\end{theorem}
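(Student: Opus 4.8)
The plan is to reduce the four equalities to the single assertion $\imath(G)=\Phi(p,q)$ and then prove that. The borderline case $1/p+1/q=1/2$ is immediate, since then all four constants vanish (the excerpt records this for the regular plane tilings) and $\Phi(p,q)=\Phi(q,p)=0$; so assume $1/p+1/q<1/2$ henceforth. For the reduction, note that the dual graph $G^{*}$ is $(q,p)$-regular and again satisfies $1/q+1/p<1/2$, so once $\imath(G)=\Phi(p,q)$ is proved in general, the duality $\imath(G)=\imath^{*}(G^{*})$ recorded above, applied to $G^{*}$, gives $\imath^{*}(G)=\imath(G^{*})=\Phi(q,p)$. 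Moreover $\deg v\equiv p$ on $V(G)$ and $\deg f\equiv q$ on $F(G)$, so $\sum_{v\in V(S)}\deg v=p\,|V(S)|$ and $\sum_{f\in F(S)}\deg f=q\,|F(S)|$ for every subgraph $S$; hence $\imath_{\sigma}(G)=\imath(G)/p=\Phi(p,q)/p$ and $\imath_{\sigma}^{*}(G)=\imath^{*}(G)/q=\Phi(q,p)/q$.

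For the core estimate $|\partial S|\ge\Phi(p,q)\,|V(S)|$ I would use the combinatorial Gauss--Bonnet theorem with left turns, as advertised in the introduction. First the routine reductions: the ratio $|\partial S|/|V(S)|$ does not increase when $S$ is split into its connected components, nor when a bounded complementary component of $G\setminus S$ is filled in (this strictly increases $|V(S)|$ and decreases $|\partial S|$), so it suffices to bound it below over connected $S$ with connected complement; then the boundary walk $bS=(v_{1},\dots,v_{b})$ is a single closed walk, and vertices where it fails to be locally simple are handled by a small perturbation of $S$ or by hand. Normalising a full turn to $1$ and assigning weight $\tfrac12-\tfrac1q$ to every corner of a $q$-gon, the curvature at an interior vertex is the constant $\kappa_{0}=1-p(\tfrac12-\tfrac1q)<0$, the left turn at $v_{i}$ equals $\tfrac12-(s_{i}+1)(\tfrac12-\tfrac1q)$ where $s_{i}\ge0$ counts the edges at $v_{i}$ running into the interior of $S$, and Gauss--Bonnet for the disc $\bar S$ reads
\[
\kappa_{0}\,n_{\mathrm{int}}+\sum_{i=1}^{b}\Bigl[\tfrac12-(s_{i}+1)\bigl(\tfrac12-\tfrac1q\bigr)\Bigr]=1,
\]
where $n_{\mathrm{int}}=|V(S)|-b$. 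Together with the identity $|\partial S|=\sum_{i}(p-2-s_{i})=(p-2)b-\sum_{i}s_{i}$ this gives a linear relation among $|\partial S|$, $b$, $\sum_{i}s_{i}$ and $n_{\mathrm{int}}$.

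The main obstacle is that this single relation is too weak: by itself it only yields $|\partial S|\ge\bigl((p-2)-\tfrac{4}{q-2}\bigr)|V(S)|$, because it does not rule out a large, nearly flat interior. (When $n_{\mathrm{int}}=0$ the very same relation already gives the \emph{stronger} bound $|\partial S|\ge\bigl((p-2)-\tfrac{2}{q-2}\bigr)|V(S)|$, so the whole difficulty lies in the interior vertices.) To reach the sharp constant I would charge interior vertices against the boundary one layer at a time: apply Gauss--Bonnet successively to the nested erosions $S\supset S^{(1)}\supset S^{(2)}\supset\cdots$ obtained by repeatedly deleting the boundary walk, or, equivalently, construct a positive weight function on $V(G)$ (a unit-capacity flow from the complement delivering $\Phi(p,q)$ to every vertex) whose feasibility is certified layer by layer. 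Either packaging should produce a linear recursion across the layers with characteristic equation $t+t^{-1}=(p-2)(q-2)-2$, whose larger root $\mu$ governs the layer growth of the $\{p,q\}$-tessellation; one then checks the algebraic identity $\Phi(p,q)=(p-2)\frac{\mu-1}{\mu+1}$, which is precisely where the square root in $\Phi$ comes from. Carrying out this amortisation sharply --- dealing with boundary vertices that carry many interior edges, with the non-simple portions of $bS$, and (in the flow version) with the passage to the infinite graph --- is the delicate point.

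For the matching upper bound on the infimum I would exhibit explicit finite subgraphs $S_{n}\subset G$ adapted to the layered structure of the tessellation (a large block of a combinatorial half-plane, or the relevant extremal region), compute $|\partial S_{n}|$ and $|V(S_{n})|$ from the transfer-matrix recursion for the sizes of successive layers --- whose dominant eigenvalue is again $\mu$ --- and verify that $|\partial S_{n}|/|V(S_{n})|\to(p-2)\frac{\mu-1}{\mu+1}=\Phi(p,q)$. Combined with the lower bound this proves $\imath(G)=\Phi(p,q)$, and then the remaining three identities follow from the first paragraph.
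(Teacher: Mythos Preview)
The paper does not prove Theorem~\ref{T:PQreg} directly; it is quoted as a known result of H\"aggstr\"om--Jonasson--Lyons and Higuchi--Shirai, and then recovered as the special case $p_1=p_2=p$, $q_1=q_2=q$ of the paper's own Theorems~\ref{T1} and~\ref{T2}. Your sketch is essentially the paper's proof of those two theorems specialised to the regular case: the same Gauss--Bonnet identity with boundary turns, the same layer-by-layer erosion via the subgraphs $S\supset S^{-}\supset (S^{-})^{-}\supset\cdots$ (the paper's ``depth''), the same linear recursion with characteristic root $\alpha=\mu$ satisfying $\alpha+\alpha^{-1}=(p-2)(q-2)-2$ and the identity $\Phi(p,q)=(p-2)\frac{\alpha-1}{\alpha+1}$, and the same upper bound via explicit growing regions (the paper uses quasi-balls $\mathcal{B}_n$). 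Your reductions by duality and regularity to the single constant $\imath(G)$ are also exactly what the paper does.
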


As we discussed above, a discrete counterpart to a Riemannian manifold whose curvature is bounded above by a negative number is a tessellation $G$ of the plane 
such that $\deg v \geq p$ for all $v \in V$ and $\deg f \geq q$ for all $f \in F$, where $p, q$ are natural numbers satisfying $1/p + 1/q < 1/2$. Then 
as in the continuous case \eqref{E:isoM}, one can expect that the isoperimetric constants of $G$ are bounded below by those of the $(p,q)$-regular graph; i.e., we can predict
for example that
\begin{equation}\label{C1}
\imath^* (G) \geq \Phi (q, p)  = (q-2) \sqrt{1 - \frac{4}{(p-2)(q-2)}},
\end{equation}
and similar expectation would apply to  the other isoperimetric constants. For this reason the inequality \eqref{C1} was conjectured by  Lawrencenko et al.\ \cite[Conjecture 1.1]{LPZ}
for the case $q=3$ (for $q > 3$ it was also conjectured in the same paper \cite{LPZ}, but only implicitly). 

Conversely if $G$ is an infinite tessellation of the plane and if $\deg v \leq p$ and $\deg f \leq q$ for all $v \in V$ and $f \in F$, where $p$ and $q$ are as before,
then it is also natural to expect that the isoperimetric constants of $G$ are not greater than those of the $(p,q)$-regular graphs. In this context 
Lyons and Peres posed the following question in \cite[Question~6.21]{LP16}.

\begin{question}\label{Q}
Suppose $G=(V, E, F)$ is a plane graph satisfying the inequalities $p_1 \leq \deg v \leq p_2$ for all $v \in V(G)$ and $q_1 \leq \deg f \leq q_2$ for all $f \in F(G)$, where
$p_1, p_2, q_1$, and $q_2$ are natural numbers  such that $1/p_i + 1/q_i  \leq 1/2$, $i=1,2$. Then does it always hold
\[
\Phi (p_1, q_1) \leq \imath(G) \leq \Phi (p_2, q_2) \, ?
\]
\end{question}

Before stating our results, let us briefly describe some known estimates for isoperimetric constants. 
The first known  
such estimate is due to Dodziuk \cite{Doz}, who proved that if $G$ is a triangulation (i.e., $\deg f =3$ for every face $f$) and $\deg v \geq 7$ for every $v \in V = V(G)$,
then we have
\[
\imath^*(G) \geq \frac{1}{26}.
\]
Dodziuk and Kendall  also proved in \cite{DK} that 
\[
\imath_\sigma (G) \geq \frac{1}{78}
\]
for  triangulations $G$ with $\deg v \geq 7$ for every $v \in V$. On the other hand, Dodziuk's bound for $\imath^*(G)$  was significantly improved by Mohar \cite{Moh92} such as
\[
\imath^* (G) \geq  \frac{p-6}{p-4},
\]
where $G$ is a triangulation of the plane with $\deg v \geq p \geq 7$ for all $v \in V$, and this result was further improved by Lawrencenko et al.\  \cite{LPZ} such as
\[
\imath^*(G) \geq \frac{(p-6)(p^2 - 8p +15)}{(p-4)(p^2 -8p +13)}.
\]
When $G$ is a tessellation of the plane 
with $\deg v \geq p$ for all $v \in V$ and $\deg f \geq q$ for all $f \in F$, where $p, q$ are natural numbers such that $1/p + 1/q < 1/2$, 
it was shown by Mohar \cite{Moh02} that
\[
\imath(G) \geq \frac{pq -2p -2q}{3q-8}.
\]
Also see \cite{KP11}, where the constants $\imath(G)$ and $\imath_\sigma(G)$ are estimated in terms of (vertex) combinatorial curvature. 
(See  Section~\ref{S:CGB} for the definition of combinatorial curvature.)

Now we present

\begin{theorem}\label{T1}
Suppose $G=(V,E,F)$ is a plane graph such that $p \leq \deg v < \infty$ and $q \leq \deg f < \infty$ for all $v \in V$ and $f \in F$, where $p$ and $q$
are natural numbers satisfying $1/p + 1/q \leq 1/2$. Then we have
\[
\begin{aligned}
\imath (G) & \geq \Phi (p, q)  \quad & \imath^{*} (G) & \geq \Phi (q, p) , \\
\imath_\sigma (G) & \geq \Phi (p, q)/p, \quad & \imath_\sigma^{*} (G) & \geq \Phi (q, p)/q.
\end{aligned}
\]
\end{theorem}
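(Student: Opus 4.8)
The plan is to reduce the four inequalities of Theorem~\ref{T1} to a single one and then invoke duality. Since $\sum_{f \in F(S)}\deg f \geq q\,|F(S)|$, the bound $\imath_\sigma^{*}(G) \geq \Phi(q,p)/q$ implies $\imath^{*}(G) \geq \Phi(q,p)$; and because the dual graph $G^{*}$ satisfies $\deg v \geq q$ and $\deg f \geq p$ while $\imath(G) = \imath^{*}(G^{*})$ and $\imath_\sigma(G) = \imath_\sigma^{*}(G^{*})$, applying the same bound to $G^{*}$ yields $\imath(G) \geq \Phi(p,q)$ and $\imath_\sigma(G) \geq \Phi(p,q)/p$. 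Hence it suffices to prove
\[
|bS| \;\geq\; \frac{\Phi(q,p)}{q}\sum_{f \in F(S)}\deg f
\]
for every finite $S$ with $F(S) \neq \emptyset$. Replacing $S$ by $\Omega := \bigcup_{f \in F(S)}\overline{f}$ and filling in the bounded complementary components only enlarges $\sum_{f}\deg f$ and only shrinks $|bS|$, and for a disjoint union the left-hand ratio is at least the minimum over the components; so we may assume $S = \Omega$ is a finite simply connected union of faces with boundary walk $bS$, keeping in mind that $bS$ may revisit a vertex when $\Omega$ is pinched.

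The engine is the combinatorial Gauss--Bonnet theorem of Section~\ref{S:CGB}: for the topological disk $\Omega$,
\[
\sum_{v} \kappa(v) \;+\; \sum_{w} \tau(w) \;=\; 1,
\]
where the first sum runs over the interior vertices of $\Omega$, $\kappa(v) = 1 - \tfrac{1}{2}\deg v + \sum_{f \ni v}\tfrac{1}{\deg f}$ is the interior combinatorial curvature, the second sum runs over the vertices visited by $bS$, and $\tau(w)$ is the combinatorial ``left turn'' (geodesic curvature) of $bS$ at $w$, which I would express entirely through $\deg w$, the degrees of the faces of $F(S)$ incident to $w$, and the number of faces of $F \setminus F(S)$ pivoted around at $w$. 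The hypothesis $1/p + 1/q \leq 1/2$ forces $\kappa(v) \leq 1 - \deg v\,\bigl(\tfrac12 - \tfrac1q\bigr) \leq 0$, so the total boundary turning is at least $1$; but, as the non-sharp bounds of Mohar and of Lawrencenko et al.\ recalled in the introduction already indicate, this crude use of non-positive curvature does not reach the sharp constant.

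The heart of the proof is to extract the \emph{sharp} constant -- equivalently, the square root in $\Phi$ -- from the left-turn bookkeeping, and for this I would prove a discrete analogue of Weil's isoperimetric theorem: a quadratic (Bol-type) inequality
\[
|bS|^{2} \;\geq\; \left(\frac{\Phi(q,p)}{q}\right)^{2}\left(\sum_{f \in F(S)}\deg f\right)^{2} \;+\; (\text{nonnegative lower-order terms}).
\]
The route would be a combinatorial inner-parallel argument: peel off successive boundary rings $\Omega = \Omega_0 \supset \Omega_1 \supset \cdots \supset \Omega_J = \emptyset$, where $\Omega_{j+1}$ is obtained from $\Omega_j$ by deleting every face incident to $b\Omega_j$; set $n_j = |b\Omega_j|$ and $A_j = \sum_{f \in F(\Omega_j)}\deg f$; apply the Gauss--Bonnet identity at each stage together with $\deg v \geq p$ and $\deg f \geq q$ to bound the decrement $n_j - n_{j+1}$ below by a linear function of $A_j$; and check that a suitably normalized quadratic $n_j^{2} - \alpha A_j - \beta A_j^{2}$ is monotone non-increasing along the peeling and vanishes once $\Omega_j$ is empty. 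The coefficient $\beta$ is pinned to $\Phi(q,p)^{2}/q^{2}$ because the $(p,q)$-regular graph -- whose four isoperimetric constants are exactly $\Phi$ by Theorem~\ref{T:PQreg} -- must saturate every inequality in the chain; equivalently, the square root arises from an AM--GM/Cauchy--Schwarz step that trades the total turning of $bS$ against the total curvature deficit stored in the interior, the relevant product being governed by $(p-2)(q-2)$. Once the displayed quadratic inequality is available, $|bS| \geq \tfrac{\Phi(q,p)}{q}\sum_{f}\deg f$ follows, and an infimum over $S$ together with duality completes all four statements.

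I expect the main obstacle to be exactly this extraction of the sharp coefficient. Non-positive curvature alone gives only linear bounds strictly weaker than $\Phi$; landing on $\Phi$ requires using the degree hypotheses quantitatively and in balance, so that the penalty paid by boundaries with many convex corners (which must then be long relative to the enclosed area) and the penalty paid by interiors rich in vertices of degree $> p$ or faces of degree $> q$ combine through the AM--GM step whose geometric mean is precisely $\Phi$. Just as delicate is verifying that the degeneracies built into the peeling -- rings that disconnect $\Omega_j$, boundary walks revisiting vertices, bridges of $\Omega_j$ traversed twice, and small $S$ for which the lower-order terms might a priori dominate -- never violate the quadratic inequality; they should only help, but each must be dispatched, and it is this combinatorial case analysis, matched carefully against the continuous Bol argument, that carries most of the weight.
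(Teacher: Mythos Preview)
Your reduction is exactly the paper's: establish
\[
\frac{|bS|}{\sum_{f\in F(S)}\deg f}\;\geq\;\frac{\Phi(q,p)}{q}
\]
for simply connected face graphs $S$, and derive the remaining three bounds by $\sum\deg f\geq q\,|F(S)|$ and by duality. Your observation that filling holes and passing to components only helps is also how the paper normalizes $S$.

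Where your proposal parts company with the paper---and where it has a genuine gap---is the extraction of the sharp constant. You suggest a quadratic Bol-type invariant and an AM--GM/Cauchy--Schwarz step to produce the square root in $\Phi$, but you do not actually carry out any such step; you only assert that the $(p,q)$-regular graph must saturate whatever inequality emerges. That is the entire content of the theorem, and in the paper the square root does \emph{not} come from AM--GM at all. The paper peels by \emph{vertices}, setting $S_N=S$, $S_{k-1}=(S_k)^-$, $s_k=|V(bS_k)|$, and proves a lemma (the paper's Lemma~\ref{ML1}) via two applications of Gauss--Bonnet---once with inner left turns on $S$, once with outer left turns on $S^-$---yielding the \emph{linear} recurrence
\[
s_k \;\geq\; s_{k-1} + (pq-2p-2q)\,(s_0+\cdots+s_{k-1}) + 2q.
\]
This is a second-order linear recurrence with characteristic polynomial $x^2-(PQ-2)x+1$, $P=p-2$, $Q=q-2$; its larger root is $\alpha=\tfrac12\bigl(PQ-2+\sqrt{(PQ-2)^2-4}\bigr)$, and the identity $(\alpha-1)/(\alpha+1)=\sqrt{1-4/PQ}$ is where the square root in $\Phi$ appears. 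The proof finishes by comparing $\{s_k\}$ to the extremal sequence $\{a_k\}$ with $a_N=s_N$ and showing $\sum_{k<N}a_k\le a_N/(\alpha-1)$, which combines with the Euler-formula bound $(q-2)\sum\deg f\le 2q\,|V(S)|-q\,|bS|$ to give the result.

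So your peeling instinct is right, but the missing idea is the two-sided Gauss--Bonnet lemma that converts one peel into a linear recurrence with the \emph{cumulative} interior size on the right; the sharp constant then drops out of elementary recurrence algebra rather than from an inequality between means. Your face-based peel and quadratic-invariant scheme might be made to work, but as written it is a hope, not a proof: you have not stated the decrement inequality you need, and the AM--GM mechanism you invoke has no evident analogue in this discrete setting.
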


The conclusions of Theorem~\ref{T1} remain  true even when $G$ has some \emph{infinigons}; i.e., $G$ has some faces $f$ with $\deg f = \infty$. 
See Section~\ref{S:last} for details. We have excluded infinigons from the statement of Theorem~\ref{T1} 
just for simplicity, because our bounds for $\imath(G)$ and $\imath_\sigma (G)$ are obtained  through dual graphs.
Meanwhile, it is worth to mention that the graph $G$ in Theorem~\ref{T1} must be an infinite
tessellation of the plane, because its \emph{corner curvature} is always non positive. See \cite[Theorem~1]{Kel11}. 

We next provide our second result, which is about  upper bounds for isoperimetric constants.

\begin{theorem}\label{T2}
Suppose $G=(V,E,F)$ is an infinite tessellation of the plane such that $\deg v \leq p$ and $\deg f \leq q$ for all $v \in V$ and $f \in F$, where $p$ and $q$
are natural numbers satisfying $1/p + 1/q \leq 1/2$. Then we have
\[
\begin{aligned}
\imath (G) & \leq \Phi (p, q)  \quad & \imath^{*} (G) & \leq \Phi (q, p) , \\
\imath_\sigma (G) & \leq \Phi (p, q)/p, \quad & \imath_\sigma^{*} (G) & \leq \Phi (q, p)/q.
\end{aligned}
\]
\end{theorem}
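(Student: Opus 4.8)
The plan is to reduce the four inequalities to the single estimate $\imath_{\sigma}(H)\le\Phi(p_{H},q_{H})/p_{H}$, valid for every infinite tessellation $H$ of the plane with $\deg v\le p_{H}$ for all $v\in V(H)$, $\deg f\le q_{H}$ for all $f\in F(H)$, and $1/p_{H}+1/q_{H}\le1/2$, and then to prove that estimate by an adaptive construction of finite disk subgraphs controlled by the combinatorial Gauss--Bonnet theorem. Granting the estimate, the reduction runs as follows. Since $G$ has bounded vertex degrees and no infinigons, $G^{*}$ is again such a tessellation, now with parameters $(q,p)$, and $\imath^{*}(G)=\imath(G^{*})$, $\imath_{\sigma}^{*}(G)=\imath_{\sigma}(G^{*})$. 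Applying the estimate to $H=G$ and to $H=G^{*}$ gives $\imath_{\sigma}(G)\le\Phi(p,q)/p$ and $\imath_{\sigma}^{*}(G)\le\Phi(q,p)/q$; and since $\sum_{v\in V(S)}\deg v\le p\,|V(S)|$ for every finite $S$, the inequality $|\partial S|/|V(S)|\le p\,|\partial S|/\sum_{v\in V(S)}\deg v$ upgrades these (on $G$ and on $G^{*}$ respectively) to $\imath(G)\le\Phi(p,q)$ and $\imath^{*}(G)\le\Phi(q,p)$. Finally, when $1/p+1/q=1/2$ the bound is $0$, and then every vertex of $H$ has nonnegative combinatorial curvature because $1-\tfrac{\deg v}{2}+\sum_{f\ni v}\tfrac1{\deg f}\ge1-\tfrac{\deg v}{2}+\tfrac{\deg v}{q_{H}}=1-\tfrac{\deg v}{p_{H}}\ge0$; such a tessellation has subexponential growth, hence is amenable, so $\imath_{\sigma}(H)=0$. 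Thus in proving the estimate we may assume $1/p_{H}+1/q_{H}<1/2$.

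For the estimate itself, fix $\varepsilon>0$ and produce a finite disk subgraph $S$ with $|\partial S|\le(\Phi(p_{H},q_{H})/p_{H}+\varepsilon)\sum_{v\in V(S)}\deg v$; since $\sum_{v\in V(S)}\deg v=2|E(S)|+|\partial S|$, this amounts to finding a disk whose edge count is large relative to its edge boundary, that is, a ``fat'' disk. I would build one by an \emph{adaptive expansion}: start from a small patch and pass from a disk $S_{k}$ to a larger disk $S_{k+1}$ by adjoining faces of $H$ along the boundary walk $bS_{k}$, chosen greedily so as to reduce the total \emph{left turn} of the boundary --- roughly, to fill the convex corners of $bS_{k}$ where the region is still pointy. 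The combinatorial Gauss--Bonnet theorem, relating the total left turn of the boundary walk of a disk to its Euler characteristic and its interior combinatorial curvature, is the bookkeeping device: it turns control of the boundary turning into control of $|\partial S_{k}|/\sum_{v\in V(S_{k})}\deg v$. Because every vertex has degree $\le p_{H}$ and every face degree $\le q_{H}$, each expansion step adds a definite amount of interior area for each unit of new boundary, and one checks that the locally least favorable configuration is exactly the one occurring in the $(p_{H},q_{H})$-regular graph, whose isoperimetric constant is $\Phi(p_{H},q_{H})/p_{H}$ by Theorem~\ref{T:PQreg}. Passing to the limit then yields the estimate.

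The step I expect to be the real obstacle is this last comparison. In the $(p_{H},q_{H})$-regular case the combinatorics of each adjoined annulus is rigid and matches the explicit extremal regions behind Theorem~\ref{T:PQreg}, whereas in a general $H$ the vertices of degree $<p_{H}$ and the faces of degree $<q_{H}$ may sit anywhere along $bS_{k}$, so one cannot simply transplant a region of the regular tiling. The core of the proof will be a case analysis of the boundary walk near such defects, showing that lower local degrees only raise the local combinatorial curvature and therefore can only help pack area behind a short boundary --- and, more delicately, a uniform quantitative form of this strong enough to recover the \emph{sharp} constant $\Phi(p_{H},q_{H})/p_{H}$ rather than some weaker bound. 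The remaining ingredients are routine but need care: keeping every $S_{k}$ simply connected so that Gauss--Bonnet applies, accounting correctly for faces that $bS_{k}$ meets only partially, and checking (using infiniteness of $H$) that the greedy expansion never stalls.
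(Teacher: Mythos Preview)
Your reduction layer is correct and matches the paper: duality carries the estimate between $G$ and $G^{*}$, the degree bound $\sum_{v}\deg v\le p|V(S)|$ upgrades $\imath_{\sigma}$ to $\imath$, and the case $1/p+1/q=1/2$ is disposed of by nonnegative curvature and subexponential growth. (The paper in fact proves the $\imath_{\sigma}^{*}$ bound first and deduces $\imath_{\sigma}$ by duality, but this is immaterial.)

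The genuine gap is your ``adaptive expansion''. As written it is not a construction: ``adjoin faces chosen greedily so as to reduce the total left turn'' does not specify which faces, and in a nonpositively curved tessellation the total outer left turn of a disk boundary is $1-\kappa(S)\ge1$ and \emph{increases} monotonically as the disk grows, so there is nothing to reduce. You then correctly flag the comparison with the regular graph as ``the real obstacle'' and promise a local case analysis near defects, but you do not carry it out; recovering the \emph{sharp} constant $\Phi(p,q)/p$ from a soft ``lower degrees only help'' heuristic is exactly the nontrivial content of the theorem.

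The paper avoids both the greedy choice and the case analysis. It takes the deterministic quasi-balls $\mathcal{B}_{n}=\mathcal{B}_{n-1}^{+}$ (all faces incident to the previous level) and applies the two Gauss--Bonnet formulae---once to $\mathcal{B}_{n}$ with a ``pretended'' computation treating outside faces as $q$-gons, once to a variant boundary walk built from the complement $V\setminus V(\mathcal{B}_{n})$ (needed because $b_{i}\mathcal{B}_{n+1}$ can miss vertices)---to obtain the single recurrence inequality
\[
b_{n}\ \le\ b_{n-1}+(pq-2p-2q)(b_{0}+\cdots+b_{n-1})+2q
\]
on the layer sizes $b_{n}=|V(\mathcal{B}_{n})\setminus V(\mathcal{B}_{n-1})|$. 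One then compares $\{b_{n}\}$ to the sequence $\{a_{n}\}$ satisfying the same relation with equality and with $a_{N}=b_{N}$; the linear recurrence has characteristic root $\alpha=\tfrac12\bigl((p-2)(q-2)-2+\sqrt{((p-2)(q-2)-2)^{2}-4}\,\bigr)$, and an elementary induction gives $a_{n}\ge\alpha a_{n-1}$ and hence $a_{0}+\cdots+a_{N-1}\ge a_{N}/(\alpha-1)-C$. Plugging this into Euler's formula for $\mathcal{B}_{N}$ (after filling any holes to make it simply connected) yields the bound on $\imath_{\sigma}^{*}$ with the exact constant. The point is that the uniform curvature \emph{lower} bound $\kappa(v)\ge-(pq-2p-2q)/(2q)$ does all the work globally through the recurrence; no local comparison at individual boundary vertices is needed.
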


Note that Theorems~\ref{T1} and \ref{T2} together completely resolve Question~\ref{Q} as well as the conjecture  by  Lawrencenko et al., and 
our results are the best possible because the isoperimetric
constants of $(p,q)$-regular graphs are the lower and upper bounds in Theorems~\ref{T1} and \ref{T2}, respectively. 

This paper is organized as follows. Preliminaries are given in Section~\ref{prelim}, where we introduce some unusual notation such as $D(S)$, $S^+$, and $S^-$.
The concept of combinatorial curvature and three versions of the combinatorial Gauss-Bonnet formula  involving boundary turns (geodesic curvature) 
are given in Section~\ref{S:CGB}. A lemma is proved in Section~\ref{S:Lemma}, and  using this lemma we
prove Theorem~\ref{T1}  in  Section~\ref{ProofT1}. Basically Sections~\ref{S:CGB}--\ref{ProofT1} are spent for the proof of Theorem~\ref{T1}.
Theorem~\ref{T2} is proved in Section~\ref{ProofT2}, where some stuffs in Sections \ref{S:CGB}--\ref{ProofT1} are used and some others are modified.
In Section~\ref{S:triangulation} we study another isoperimetric constant involving boundary vertices, and
the paper is finished in Section~\ref{S:last} where we discuss plane graphs with infinigons (called \emph{locally tessellating plane graphs}).

\section{Preliminaries}\label{prelim}
Suppose $G$ is a graph with the vertex set $V = V(G)$ and the edge set $E = E(G) \subset V \times V$. We say that
$G$ is \emph{connected} if it is connected as a one-dimensional simplicial complex,  \emph{infinite} if it has infinitely many vertices and edges, 
and \emph{simple} if it does not contain any multiple edges nor self loops; i.e., for $u, v \in V$ there is at most one edge $[u, v] \in E$ with endpoints $u$ and  $v$, and
every edge must have distinct endpoints. Because $G$ is always assumed to be an undirected graph in this paper, an edge of the form $[u, v]$ must be considered  the same as $[v, u]$.
If $[u, v] \in E$, the vertices $u, v$ are called \emph{neighbors} or \emph{adjacent}.

A graph $G$ is called \emph{planar} if there exists a continuous injective map $h : G \hookrightarrow \mathbb{R}^2$, 
and the embedded image $h(G)$ is called a \emph{plane graph}. In general a planar graph $G$ and its embedded plane graph $h(G)$ are different objects, 
but for simplicity we will not distinguish them and use the letter $G$ for its embedded graph $h(G)$.
Thus $G$ will be considered a subset of $\mathbb{R}^2$. Moreover, we will always assume that $G$ is embedded into $\mathbb{R}^2$ \emph{locally finitely}, which
means that every compact set in $\mathbb{R}^2$ intersects only finitely many vertices and edges of $G$.
The closure of each connected component of $\mathbb{R}^2 \setminus G$ is called a (closed) \emph{face} of $G$, and we denote by $F = F(G)$ the face set of $G$.
Because a plane graph $G$ is completely determined by its vertex, edge, and face sets, we will identify $G$ with the triple $(V, E, F)$ and use the notation $G = (V, E, F)$,
which is already used in the statements of Question~\ref{Q} and Theorems~\ref{T1} and \ref{T2}.

Note that in our definition each face is a closed set in $\mathbb{R}^2$. Similarly we will treat vertices and edges of $G$ as closed sets in $\mathbb{R}^2$, and
two objects in $V \cup E \cup F$ will be called \emph{incident} to each other if one is a proper subset of the other.
The \emph{degree} of a vertex $v \in V$ is the number of edges  incident to $v$, and similarly the \emph{degree} or 
\emph{girth} of a face $f \in F$ is the number of edges incident to $f$. The degrees of $v \in V$ and $f \in F$ will be denoted by $\deg v$ and $\deg f$, respectively.

Following \cite{BP01, BP06}, we call a connected simple plane graph a \emph{tessellation} or \emph{tiling} if the following conditions hold:
\begin{enumerate}[(a)]
\item every edge is incident to two distinct faces;
\item any two distinct faces are either disjoint or intersect in one vertex or one edge;
\item every face is a polygon with finitely many sides; i.e., if $f^\circ$ is a component of $\mathbb{R}^2 \setminus G$, then $f^\circ$ is homeomorphic
to the unit disk $\mathbb{D} \subset \mathbb{R}^2$, the topological boundary of $f^\circ$ is 
homeomorphic to a circle, and  we have $\deg f < \infty$, where $f = \overline{f^\circ} \in F$ is the closure of $f^\circ$.
\end{enumerate}
If $G$ is a tessellation, we should have  $\deg v = |E(v)|=|F(v)|$ and $\deg f = |V(f)|= |E(f)|$, where  $|\cdot|$ denotes the cardinality of the given set. Here
$V(a)$, $E(a)$, and $F(a)$  denote the sets of vertices, edges, and faces, respectively, incident to $a \in V \cup E \cup F$. 
Note that we must have $\deg v < \infty$ for every $v \in V$, because we have assumed that $G$ is embedded into $\mathbb{R}^2$ \emph{locally finitely}.
 
Following \cite{KP11}, we call $G$ a \emph{locally tessellating} plane graph, or a \emph{local tessellation} of the plane, 
if $G$ is a connected simple plane graph satisfying (a) and (b) above, and  (c$'$) below instead of (c):
\begin{enumerate}[(c$'$)]
\item every face is a polygon with finitely or infinitely many sides; i.e., a component $f^\circ$ of $\mathbb{R}^2 \setminus G$ either satisfies the statement (c) above, or
it is homeomorphic to the upper half plane $\mathbb{R} \times \mathbb{R}^+ = \{ (x, y) \in \mathbb{R}^2 : y > 0 \}$, the topological boundary of $f^\circ$ is
homeomorphic to the $x$-axis, and $\deg f = \infty$ with  $f = \overline{f^\circ}$.
\end{enumerate}
That is, a local tessellation is almost the same as tessellations except that it may have faces of infinite degrees, called \emph{infinigons}. 
Examples of local tessellations include tessellations of the plane, and infinite trees such that $\deg v \geq 3$ for every vertex $v$. 
However, we will not consider infinigons until Section~\ref{S:last}, the last section of the paper, so one can mostly regard $G$ as just a tessellation.
 
For $V(S) \subset V$, $E(S) \subset E$, and $F(S) \subset F$, the triple $S = (V(S), E(S), F(S))$ is a  \emph{subgraph} of $G=(V, E, F)$ if $V(e) \subset V(S)$ for every $e \in E(S)$
and $E(f) \subset E(S)$ for every $f \in F(S)$. In this case we use the notation $S \subset G$.
Remark that our definition for the face set $F(S)$ of $S$ is different from the usual definition, because in our definition $F(S)$ has to be a subset of $F$ and a nonempty 
subgraph might have the empty face set. A subgraph $S \subset G$ is called \emph{induced} if it is induced by its vertex set; i.e.,
$S$ is induced if for $e \in E$ we have $e \in E(S)$ whenever $V(e) \subset V(S)$, and for $f \in F$ we have  $f \in F(S)$ whenever $V(f) \subset V(S)$. 
We call $S \subset G$ a \emph{face graph} if $V(S) = \bigcup_{f \in F(S)} V(f)$ and $E(S) = \bigcup_{f \in F(S)} E(f)$. Definitely face graphs are the subgraphs
consisting of faces. If $S$ is a connected subgraph and $\chi(S) = |V(S)| - |E(S)| + |F(S)|= 1$, where $\chi(\cdot)$ denotes the Euler characteristic of $S$, then $S$ will be called
\emph{simply connected}. A \emph{polygon} is a simply connected face graph. Note that a face $f \in F$ itself can be considered a subgraph classified as a polygon.

A \emph{path} is a sequence of vertices of the form $[v_0, v_1, \ldots, v_n]$ such that $[v_{i-1}, v_{i}] \in E$ for all $i = 1,2, \ldots, n$. In this case we will say that the \emph{length}
of the path is $n$, and the path connects (or joins) the initial vertex $v_0$ to the terminal vertex $v_n$. A path is called \emph{cycle} if its initial and terminal vertices coincide,  
and \emph{simple} if no vertices appear more than once except the case that the path is a cycle
and the only repetition is the initial and terminal vertices. We remark that there is a path of \emph{zero} length; i.e., a path could be of the form $[v]$ for some $v \in V$.
However, because we consider only simple graphs which do not have self loops, there is no cycle of length one; i.e., the length of a cycle must be either zero or at least two.
A \emph{walk} will mean a  union of paths, which usually arises when one walks along the boundary of some region.
We will regard a path $\gamma=[v_0, v_1, \ldots, v_n]$ as a subgraph of $G$ with $V(\gamma) = \{ v_i : i =0,1, \ldots, n\}$,
$E(\gamma)  = \{ [v_{i-1}, v_i] : i =1,2, \ldots, n\}$, and $F(\gamma) = \emptyset$. Similar definition will be applied to walks. However, 
paths (or walks) have one more structure than usual subgraphs: the \emph{orientation}. 

Our definition for paths or cycles is different from the usual one in graph theory, because we allow some repetitions of vertices or edges in a path.
See \cite{BoMur, Dies} for the usual definition for paths, cycles, and walks in graph theory. 
If we need to mention paths (or cycles) without repetitions of vertices, we will use the terminology \emph{simple path} (or cycle, respectively) as defined in the previous paragraph.

Suppose a nonempty subgraph $S \subset G$ is given. Then we define the region $D(S)$ determined by $S$ as
\[
D(S) = \left( \bigcup_{f \in F(S)} f \right) \cup \left( \bigcup_{e \in E(S)} e \right) \cup \left( \bigcup_{v \in V(S)} v \right).
\]
The cycle or the union of cycles obtained by walking along the topological boundary of $D(S)$ in the positive orientation is called the \emph{boundary walk} of $S$ and denoted by $b S$.
Note that if $S$ is connected and $\chi(S) = 2 -m$, that is, if $\mathbb{R}^2 \setminus D(S)$ has $m$ components, then $b S$  can be written as a union of $m$ 
cycles.  In other words, in this case we can write $b S = \Gamma_1 \cup \Gamma_2 \cup \cdots \cup \Gamma_m$,
where each $\Gamma_j$ is a cycle corresponding to the boundary of a component of $\mathbb{R}^2 \setminus D(S)$. Now suppose that 
$S$ is not connected (but finite); i.e., let us assume that $S= S_1 \cup \cdots \cup S_k$, where each $S_i$ is a connected component of $S$. 
Then each $b S_i$ can be written as a union of cycles as above, hence so can be $b S = b S_1 \cup \cdots \cup b S_k$. 
Now we define $|b S|$ as the sum of lengths of the cycles consisting of $b S$. 
Remark that in this paper the notation $| \cdot |$ denotes mostly the cardinality of the given set, but
it will mean the length if the given object is a path or a walk, as in the case $| bS |$. 
Next we define the edge boundary $\partial S$ of $S$, which was in fact already defined in the introduction, as the set of edges
connecting $V(S)$ to $V \setminus V(S)$. 

For $v, w \in V$,  the combinatorial distance $d(v, w)$ between $v$ and $w$ is the minimum of the lengths of paths joining $v$ and $w$. For $v_0 \in V$ and $n \in \mathbb{N}$,
the combinatorial ball $B_n (v_0)$ of radius $n$ and centered at $v_0$ is the induced subgraph whose vertex set consists of the vertices
$v \in V$ satisfying $d (v, v_0) \leq n$. Now for a given nonempty 
subgraph $A \subset G$, let $A^+$ be the \emph{face} graph consisting of all the faces incident to the vertices in $V(A)$. We let $\mathcal{B}_1 = A^+$, and the \emph{quasi-balls} $\mathcal{B}_n := \mathcal{B}_n(A)$ 
with \emph{height} $n \in \mathbb{N}$ and \emph{core} $A$ are defined inductively by $\mathcal{B}_{k+1} = \mathcal{B}_k^+$ for all $k \in \mathbb{N}$.
Conversely, for $S \subset G$ we define $S^-$ as the \emph{induced} subgraph of $S$ whose vertex set is $V(S) \setminus V(bS)$. That is, $S^-$ is the subgraph of $S$ 
whose vertices lie in the interior of $D(S)$, and it has to be induced. Note that even though $S$ does not have to be induced, $S^-$ is always induced by the definition.
Finally we define the \emph{depth} of a subgraph $S$ as follows: $S$ is of depth $0$ if $S^-$ is empty, and inductively we call $S$ is of 
depth $n$ if $S^-$ is of depth $n-1$. 

\section{Combinatorial Gauss-Bonnet Theorem}\label{S:CGB}
Let $G = (V, E, F)$ be a tessellation of the plane. For each $v \in V$, we define the (vertex) \emph{combinatorial curvature} $\kappa (v)$ at $v$ by 
\begin{equation}\label{E:Comv}
\kappa(v) = 1 - \frac{\deg v}{2} + \sum_{f \in F(v)} \frac{1}{\deg f}.
\end{equation}
If $V_0$ is a finite subset of $V$,  we define $\kappa(V_0) = \sum_{v \in V_0} \kappa (v)$, and for a finite subgraph $S \subset G$ we use the notation 
$\kappa (S) := \kappa (V(S))$.

It is not clear when the concept of combinatorial curvature arose, but it was already considered by Descarte for convex polyhedra (cf.\,\cite{Fed82}), and studied
by Nevanlinna in the early 20th century \cite{Nev}. The current formula \eqref{E:Comv} is due to Stone \cite{Sto76}, and the idea was used in \cite{Gro} as well. 
Since then properties of combinatorial curvature have been extensively studied by many researchers 
\cite{AH23, BP01, BP06, Chen09, CC08, DeMo07, Ghi23, Hig, HJ15, HuS19, HS20, HS22, Kel10, Kel11, Kel17, KP11, KPP,  Oh14, Oh17, Oh22, OS16, Old17, SY, Woe98, Zuk}
(and more).

The meaning of the combinatorial curvature is as follows. We associate each face $f \in F$ with a regular ($\deg f$)-gon of side lengths one, and we paste these
regular polygons along sides by the way that the corresponding faces are pasted. Then the resulting surface, which we denote by $\mathcal{S}_G$, becomes 
a metric surface called a \emph{polyhedral surface},
a special type of Aleksandrov surfaces \cite{AZ, Res}. Definitely $\mathcal{S}_G$ is homeomorphic to the Euclidean plane, hence we can assume that $\mathcal{S}_G$
includes $G$ in a natural way. Also it is not difficult to see that $\mathcal{S}_G$ is locally isometric to subsets of the Euclidean plane except at the vertices of $G$, 
and at each $v \in V \subset \mathcal{S}_G$  the \emph{total angle} $T(v)$ becomes $\sum_{f \in F(v)} (\pi - 2 \pi/\deg f)$. Therefore the atomic curvature at $v$ is 
\begin{equation}\label{E:IC}
\omega (\{ v \} ) = 2 \pi - T(v) = 2 \pi - \sum_{f \in F(v)} \left( \pi - \frac{2 \pi}{\deg f} \right) = 2 \pi \cdot \kappa (v).
\end{equation}
Here $\omega (\cdot )$ denotes the integral curvature defined on Borel sets of $\mathcal{S}_G$. Thus the combinatorial curvature $\kappa$ is nothing but the usual 
integral curvature defined on the polyhedral surface $\mathcal{S}_G$, but it is normalized so that $2 \pi$ corresponds to $1$ because we do not want to carry $2 \pi$
in every formula. 

In differential geometry perhaps the Gauss-Bonnet formula is one of the most basic and useful tools related to curvature. It seems not much different in the discrete setting either,
but there are several versions for the \emph{combinatorial} Gauss-Bonnet formula in graph theory. Let $G$ be a tessellation embedded locally finitely into a 2-dimensional compact 
manifold $\mathcal{M}$. Then $G$ must be a finite graph, and one can show that
\begin{equation}\label{E:BGB}
\kappa(G) = \sum_{v \in V} \kappa(v) = \chi (\mathcal{M}),
\end{equation}
where $\chi (\mathcal{M})$ is the Euler characteristic of $\mathcal{M}$ (cf.\ \cite{BP01, Chen09, DeMo07, Oh17}).
We will call \eqref{E:BGB} the basic form of the Gauss-Bonnet formula.
The Gauss-Bonnet formulae we need, however, are more complicated than \eqref{E:BGB}, and we have to introduce some more notation.

Let $\Gamma = [v_0, v_1, \cdots, v_n = v_0]$ be a cycle in a tessellation $G$ of the plane. Assume that $n \geq 2$, 
and for  $k \in \{ 1,2, \ldots, n\}$ let $f_1, f_2, \ldots, f_s$ be the faces in $F$
that are incident to $v_k$ and lies on the \emph{right} of $[v_{k-1}, v_k, v_{k+1}]$, where we interpret $v_{n+1} = v_1$ if $k = n$. 
Then the \emph{outer left turn} occurred  near $v_k$ is defined by 
\begin{equation}\label{outerLT}
\tau_o (v_k; v_{k-1}, v_{k+1}) =   \sum_{j=1}^{s} \left( \frac{1}{2} - \frac{1}{\deg f_j} \right) - \frac{1}{2},
\end{equation}
and the outer left turn of $\Gamma$ is defined by the formula
\[
\tau_o (\Gamma) = \sum_{k=1}^n \tau_o (v_k; v_{k-1}, v_{k+1}). 
\]
\begin{figure}[t]
 \centering
 \subfigure[the case $v_{k-1} \ne v_{k+1}$]{\scalebox{.9}{\input{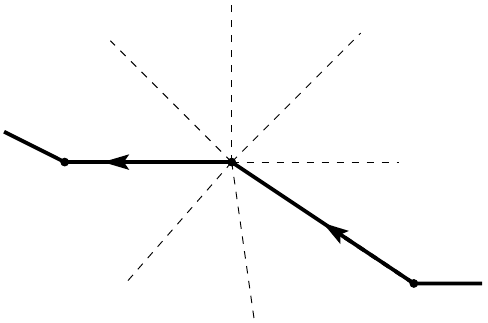_t}}}  \\
 \subfigure[the case $v_{k-1} = v_{k+1}$ for $\tau_o$]{\scalebox{.9}{\input{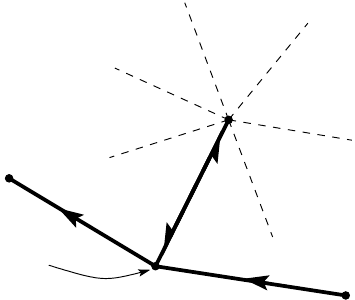_t}}} \hspace{.1 in}
  \subfigure[the case $v_{k-1} = v_{k+1}$ for $\tau_i$]{\scalebox{.9}{\input{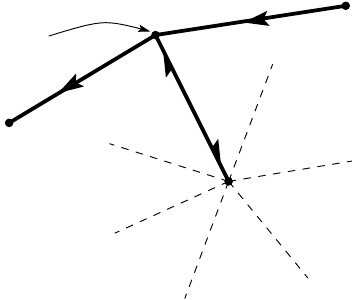_t}}} 
\caption{Faces incident to $v_k$: the faces $f_j$ are on the right of $[v_{k-1}, v_k, v_{k+1}]$, and the faces $g_j$ are on the left of $[v_{k-1}, v_k, v_{k+1}]$.  
The case (b) will not appear for inner left turns, and the case (c) will not appear for outer left turns.}\label{F:leftturn}
\end{figure}
Similarly, let $g_1, g_2, \ldots, g_t$ be the faces in $F$
that are incident to $v_k$ and lies on the \emph{left} of $[v_{k-1}, v_k, v_{k+1}]$. Then the \emph{inner left turn} occurred near $v_k$ is defined by 
\begin{equation}\label{innerLT}
\tau_{i} (v_k; v_{k-1}, v_{k+1}) =   \frac{1}{2} - \sum_{j=1}^{t} \left( \frac{1}{2} - \frac{1}{\deg g_j} \right),
\end{equation}
and the inner left turn of $\Gamma$ is defined by the sum
\[
\tau_i (\Gamma) = \sum_{k=1}^n \tau_i (v_k; v_{k-1}, v_{k+1}).
\]
Here we remark that if $v_{k-1} = v_{k+1}$, then all the faces incident to $v_k$ should be considered lying both on the right and on the left of $[v_{k-1}, v_k, v_{k+1}]$,
hence in this case we must  have $s = t = \deg v_k$  and 
\[
\kappa (v_k) = \frac{1}{2}  -\tau_{o} (v_k;  v_{k-1}, v_{k+1}) = \frac{1}{2} + \tau_{i} (v_k;  v_{k-1}, v_{k+1}) .
\]
See Figure~\ref{F:leftturn}.
Finally if $\Gamma$ is a cycle of zero length, that is, if $\Gamma = [v_0]$ for some $v_0 \in V$,  we define $\tau_{o} (\Gamma) = 1 - \kappa(v_0)$ and
 $\tau_i (\Gamma) = \kappa(v_0) -1$. Note that the quantity $2 \pi  (1 - \kappa(v_0))$ is the total angle $T(v_0)$ at the point $v_0 \in \mathcal{S}_G$.

To explain the meaning of the inner and outer left turns defined above, suppose a cycle $\Gamma$ is given. 
Though we will consider more complicated cases later, here let us assume for simplicity that
$\Gamma$ is a simple cycle enclosing a polygon, say $S$, in the positive direction. Moreover, we regard $\Gamma \subset G$ as a set lying in the polyhedral surface  $\mathcal{S}_G$.
Then we imagine that  a person stands  one step to the right  from $\Gamma$,  and walks side by side along $\Gamma$.
Then  $2 \pi \cdot \tau_{o} (v_k; v_{k-1}, v_{k+1})$ would be the angle by which he or she turns to the left near $v_k$ (in the surface $\mathcal{S}_G$), and 
the total left turn made after a complete rotation along $\Gamma$ would be $2 \pi \cdot \tau_{o} (\Gamma)$. Note that in this case all the vertices of $S$ 
will be inside the path traversed by that person.
For the inner left turn, we think that this person stands one step to the left from $\Gamma$
and walks, and observe that $2 \pi \cdot \tau_{i} (v_k; v_{k-1}, v_{k+1})$ is the left turn made near $v_k$.  
Thus $2 \pi \cdot \tau_{i} (\Gamma)$ will become the total left turn made after a complete rotation along $\Gamma$, and in this case only the vertices in $S^-$
will be inside the path along which the person traveled. See Figure~\ref{F:leftturn-meaning}.
\begin{figure}[t]
\begin{center}
\input{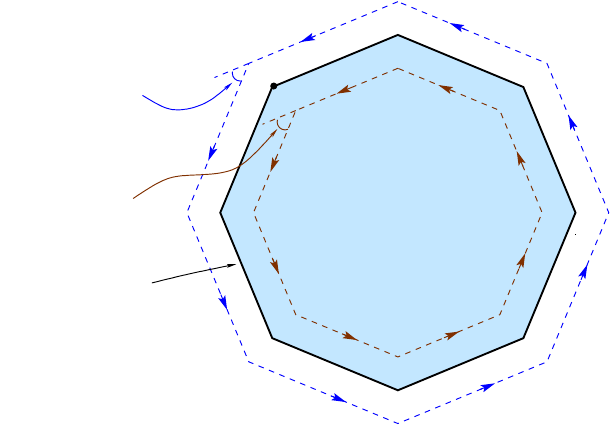_t}
\caption{Meaning of outer and inner left turns.}\label{F:leftturn-meaning}
\end{center}
\end{figure}
Also note that, because $\Gamma$ was assumed to be simple, we have 
\begin{align*}
\tau_i (\Gamma) - \tau_o (\Gamma) & = \sum_{k=1}^n \bigl(\tau_{i} (v_k; v_{k-1}, v_{k+1}) - \tau_{o} (v_k; v_{k-1}, v_{k+1}) \bigr)\\
& =\sum_{k=1}^n \kappa(v_k) = \kappa (\Gamma).
\end{align*}

As we explained at the end of the previous section, if $S$ is a connected finite subgraph of $G$ then we can write 
$b S = \Gamma_1 \cup \Gamma_2 \cup \cdots \cup \Gamma_m$,
where each $\Gamma_j$ is a cycle corresponding to the topological boundary of a component of  $\mathbb{R}^2 \setminus D(S)$. In this case we define $\tau_o (bS)$ as
$\tau_o (bS) = \sum_{j=1}^m \tau_o (\Gamma_j)$. If $S$ is disconnected, that is, if $S = S_1 \cup \cdots \cup S_k$ with connected components $S_j$'s,
then we define $\tau_o (b S) = \sum_{j=1}^k \tau_o (b S_j)$. Now we are ready to describe our first Gauss-Bonnet formula. 

\begin{theorem}[Combinatorial Gauss-Bonnet Theorem-Type I]\label{CGBT1}
Suppose $G$ is an infinite tessellation and $S \subset G$ a finite subgraph of $G$, which is not necessarily connected. Then we have
\begin{equation}\label{GBF-1}
\kappa(S) + \tau_{o} (b S) = \chi (S),
\end{equation}
where  $\chi (S) = |V(S)| - |E(S)| + |F(S)|$, the Euler characteristic of $S$.
\end{theorem}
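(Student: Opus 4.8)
The plan is to deduce \eqref{GBF-1} from a single identity between angle sums on the polyhedral surface $\mathcal{S}_G$, and then verify that identity by double counting. After multiplying by $2\pi$ and using \eqref{E:IC}, the assertion reads
\[
\sum_{v\in V(S)}\bigl(2\pi-T(v)\bigr)\;+\;2\pi\,\tau_o(bS)\;=\;2\pi\,\chi(S),
\]
where $T(v)=\sum_{f\in F(v)}(\pi-2\pi/\deg f)$ is the total angle at $v\in\mathcal{S}_G$. Geometrically this is just the Gauss--Bonnet theorem for the flat cone surface $\mathcal{S}_G$ applied to a thin regular neighbourhood $N$ of $D(S)$: its curvature equals the atomic curvature $2\pi\kappa(S)$ (if $N$ is thin enough it contains no vertex outside $V(S)$), one has $\chi(N)=\chi(D(S))=\chi(S)$, and $\partial N$ is a piecewise geodesic obtained by pushing $bS$ one step to the right, whose total left turn is $2\pi\,\tau_o(bS)$ --- the ``person walking'' of Figure~\ref{F:leftturn-meaning} made rigorous. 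I would, however, present the combinatorial proof, which needs no Gauss--Bonnet theorem for Aleksandrov surfaces. By additivity of $\kappa$, $\tau_o(b\,\cdot\,)$ and $\chi$ over connected components we may assume $S$ is connected; and if $S$ is a single vertex the identity $\kappa(\{v\})+\tau_o([v])=\kappa(v)+(1-\kappa(v))=1=\chi(\{v\})$ is immediate from the convention for zero-length cycles, so we may also assume $S$ has at least one edge, hence no isolated vertices.

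First I would expand the curvature term. Interchanging the order of summation,
\[
\sum_{v\in V(S)}T(v)=\sum_{f\in F}\lvert V(f)\cap V(S)\rvert\Bigl(\pi-\frac{2\pi}{\deg f}\Bigr).
\]
Split the faces into those in $F(S)$, for which $\lvert V(f)\cap V(S)\rvert=\deg f$, and those not in $F(S)$. Let $E_0(S),E_1(S),E_2(S)$ be the edges of $E(S)$ exactly $0,1,2$ of whose two incident faces lie in $F(S)$, so $E(S)=E_0(S)\sqcup E_1(S)\sqcup E_2(S)$ and $\sum_{f\in F(S)}\deg f=2\lvert E_2(S)\rvert+\lvert E_1(S)\rvert$. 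Writing $\Sigma':=\sum_{f\notin F(S)}\lvert V(f)\cap V(S)\rvert(\pi-2\pi/\deg f)$, this gives
\[
2\pi\,\kappa(S)=2\pi\lvert V(S)\rvert-2\pi\lvert E_2(S)\rvert-\pi\lvert E_1(S)\rvert+2\pi\lvert F(S)\rvert-\Sigma'.
\]
Since $\chi(S)=\lvert V(S)\rvert-\lvert E(S)\rvert+\lvert F(S)\rvert$ and $\lvert E(S)\rvert=\lvert E_0(S)\rvert+\lvert E_1(S)\rvert+\lvert E_2(S)\rvert$, subtracting shows that \eqref{GBF-1} is equivalent to the single identity
\[
2\pi\,\tau_o(bS)\;=\;\Sigma'\;-\;\pi\bigl(\lvert E_1(S)\rvert+2\lvert E_0(S)\rvert\bigr).
\]

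Next I would treat the turning term. Every edge carried by $bS$ lies in $E_1(S)\cup E_0(S)$ and is traversed once (if in $E_1(S)$) or twice (if in $E_0(S)$), so $bS$ has exactly $\lvert bS\rvert=\lvert E_1(S)\rvert+2\lvert E_0(S)\rvert$ corners; and by the definition of $\tau_o$, $2\pi\,\tau_o(v_k;v_{k-1},v_{k+1})=\theta_{\mathrm{ext}}(v_k)-\pi$, where $\theta_{\mathrm{ext}}(v_k)$ is the sum of the angles $\pi-2\pi/\deg f$ over the faces $f$ in the right wedge at that corner. Hence $2\pi\,\tau_o(bS)=\bigl(\sum_{\text{corners}}\theta_{\mathrm{ext}}\bigr)-\pi(\lvert E_1(S)\rvert+2\lvert E_0(S)\rvert)$, and everything reduces to proving $\sum_{\text{corners}}\theta_{\mathrm{ext}}=\Sigma'$. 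I would prove this vertex by vertex: fix $v\in V(S)$ and look at the cyclic sequence of faces around $v$; removing from it the faces in $F(v)\cap F(S)$ together with the edges in $E(v)\cap E(S)$ breaks the faces of $F(v)\setminus F(S)$ into finitely many consecutive ``runs''. The claim is that the corners of $bS$ at $v$ correspond bijectively to these runs, with the right wedge of a corner equal to the corresponding run; granting this, $\sum_{\text{corners at }v}\theta_{\mathrm{ext}}=\sum_{f\in F(v)\setminus F(S)}(\pi-2\pi/\deg f)$, and summing over $v\in V(S)$ gives $\sum_{\text{corners}}\theta_{\mathrm{ext}}=\Sigma'$, completing the proof.

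The main obstacle is exactly the bijection just claimed, i.e.\ understanding the local structure of the boundary walk near a vertex $v$ of $S$ where $S$ need not be ``locally full'': one must verify that $D(S)$ near $v$ is a collection of wedges (the faces of $F(v)\cap F(S)$) and spikes (the $E(S)$-edges at $v$) whose complementary arcs are exactly the runs, that each run is swept by one and only one passage of $bS$ through $v$ --- a dangling edge cuts an extra run, a face of $F(S)$ bounds one, and (since $v$ is not isolated) the whole circle is never a single run --- and that the orientation ``$bS$ in the positive direction'' really puts the exterior faces on the right, so that $\theta_{\mathrm{ext}}$ of a corner is the total angle of its run. This is the step where the ``$v_{k-1}=v_{k+1}$'' case of Figure~\ref{F:leftturn} and the possibility that $bS$ visits $v$ several times must be handled carefully; everything else is the bookkeeping above.
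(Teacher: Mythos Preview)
Your proof is correct, and the one-sentence geometric sketch you give at the start (``apply Gauss--Bonnet for $\mathcal{S}_G$ to a thin regular neighbourhood of $D(S)$'') is exactly the paper's proof: there the $\epsilon$-neighbourhood of $D(S)$ is taken, its corners are sharpened, and the identity \eqref{GBF-1} is read off from the Gauss--Bonnet theorem for polyhedral surfaces \cite[p.\,214]{AZ}. Your main argument, however, goes a genuinely different way: instead of invoking an external Gauss--Bonnet theorem you unwind $\kappa(S)$ and $\tau_o(bS)$ by double counting corners $(v,f)$ and reduce everything to the local statement that the passages of $bS$ through a fixed vertex $v$ are in bijection with the maximal arcs of $F(v)\setminus F(S)$ cut out by the $E(S)$-edges at $v$. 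The bookkeeping via the partition $E(S)=E_0\sqcup E_1\sqcup E_2$ is clean and the key reductions $|bS|=|E_1|+2|E_0|$ and $2\pi\,\tau_o(bS)=\sum\theta_{\mathrm{ext}}-\pi|bS|$ are handled correctly, including the degenerate corner $v_{k-1}=v_{k+1}$. What your approach buys is self-containment: a reader need not know anything about curvature measures or geodesic turning on Aleksandrov surfaces, and your argument would port verbatim to other normalisations of combinatorial curvature. What the paper's approach buys is brevity and a transparent link to the smooth Gauss--Bonnet formula, at the cost of citing a nontrivial theorem as a black box. The ``main obstacle'' you flag---the local bijection between runs and corners---is real but you have identified it precisely; once one notes that every face $f\in F(v)\cap F(S)$ has both its edges at $v$ in $E(S)$ (so the arcs between consecutive $E(S)$-edges are either a single $F(S)$-face or consist entirely of non-$F(S)$ faces), the bijection is immediate.
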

\begin{proof}
We regard $D(S)$ as a set lying in the polyhedral surface $\mathcal{S}_G$, and for sufficiently small $\epsilon>0$ let $P$ be the region in $\mathcal{S}_G$
which is obtained from the closed $\epsilon$-neighborhood of $D(S)$ by \emph{sharpening the corner} (see Figure~\ref{F:sharpen}). 
\begin{figure}[t]
\begin{center}
\input{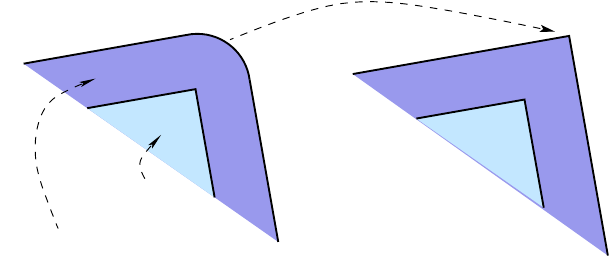_t}
\caption{Obtaining the region $P$ by sharpening corners. This operation does not change geodesic curvature of the boundary, but it makes it easier to compare
the geodesic curvature of $b P$ with $\tau_o (b S)$.}\label{F:sharpen}
\end{center}
\end{figure}
That is, we slightly expand $D(S)$  in order to obtain $P$, and observe that the topological boundary $b P$ of $P$ becomes the union of paths traversed by a person 
walking in a position one step away to the right from $b S$. Let $P^\circ$ be the topological interior of $P$. Then the total left turn $\tau (b P)$, or the total geodesic curvature, 
of $bP$ is nothing but $2 \pi \cdot \tau_{o} (\Gamma)$ as we observed before. 
Moreover, the integral curvature $\omega (P^\circ)$ of $P^\circ$ is the same as $2 \pi \cdot \sum_{v \in V(S)} \kappa (v) = 2 \pi \cdot\kappa (S)$ by \eqref{E:IC}, 
because $\mathcal{S}_G$ is locally Euclidean except at the vertices of $G$. Finally it is clear that the Euler characteristic $\chi(P)$ of $P$ 
is the same as that of $S$. Thus Theorem~\ref{CGBT1} follows from the Gauss-Bonnet Theorem for polyhedral surface \cite[p.\,214]{AZ}, which says that
\begin{equation}\label{GBT-S}
\omega (P^\circ) + \tau (b P) = 2 \pi \cdot \chi(P).
\end{equation}
This completes the proof of Theorem~\ref{CGBT1}.
\end{proof}

\begin{remark}\label{R-face}
The equation \eqref{GBF-1} is an intrinsic property of $S$ itself and does not depend on the environment surrounding $S$. That is, the quantity $\deg f$ of a face $f \notin F(S)$
has nothing to do with whether \eqref{GBF-1} holds true, hence one can use any number for $\deg f$ as long as the same number is applied to both $\kappa(S)$ and $\tau_o (bS)$.
To prove this claim, suppose we write $b S = \Gamma_1 \cup \Gamma_2 \cup \cdots \cup \Gamma_m$ as explained in the previous section and in Theorem~\ref{CGBT1}.
Let a face $f \in F \setminus F(S)$ be given. Fix a vertex $v \in V(bS) \cap V(f)$ if any, and note that among the cycles $\Gamma_1, \Gamma_2, \ldots, \Gamma_m$ 
there exists exactly one $\Gamma_j$ that passes through $v$ where $f$ lies on the right of the cycle. This is because 
each cycle $\Gamma_j$ corresponds to a component of $\mathbb{R}^2 \setminus D(S)$, and among the components only one includes the interior of $f$.
Therefore with respect to the vertex $v$, the quantity $\deg f$ appears exactly twice in \eqref{GBF-1}: one in  $\kappa(v)$ and the other in $\tau_o (bS)$. 
But we have  $+1/ \deg f$ in the computation of $\kappa(v)$ while we have $- 1/ \deg f$ in that of $\tau_o(bS)$, 
so they should be canceled out. Because the values related to $f$ can contribute to $\kappa(S)$ or $\tau_o (bS)$  only  through  vertices in $V(bS) \cap V(f)$ and the above argument holds for
every vertices in $V(bS) \cap V(f)$, we conclude that the claim is true; i.e., in \eqref{GBF-1} we can replace  $\deg f$ by any value we want 
as long as the values we use  are the same both in $\kappa(v)$ and in $\tau_o (bS)$.
\end{remark}

To deduce the second combinatorial Gauss-Bonnet formula, suppose a finite subgraph $S \subset G$ is given. 
Furthermore, let us assume for a moment that $S$ is a \emph{face graph}, and suppose that  the interior
of $D(S)$ has $m$ connected components, say $D_1, D_2, \ldots, D_m$.  For each $j =1,2, \ldots, m$, let  $S_j$ be the face  subgraph of $S$ such that 
$D(S_j) = \overline{D}_j$, where $\overline{(\cdot)}$ is the topological closure of the given set. 
Then we can write $b S_j = \gamma_1^j \cup \gamma_2^j \cup \cdots \cup \gamma_{l_j}^j$ if $\mathbb{R}^2 \setminus  D_j$
has $l_j$ components, where each $\gamma_k^j$ is a cycle corresponding to the boundary of a component of $\mathbb{R}^2 \setminus  D_j$.  We remark 
at this point that what we consider are the components of $\mathbb{R}^2 \setminus  D_j$, not the components of $\mathbb{R}^2 \setminus  \overline{D}_j$ 
(see $\gamma_2$ in Figure~\ref{F:bS}). It is also worth to mention that no $\gamma_j$ is of length zero. Thus for a face graph $S$ we can write 
\begin{equation}\label{E:innerwalk}
b_i S := b S = bS_1 \cup b S_2 \cup \cdots \cup b S_m = \gamma_1 \cup \cdots \cup \gamma_l,
\end{equation}
 where $l=l_1 + l_2 + \cdots + l_m$ and each $\gamma_k$ corresponds to
the boundary of a component of complements of a component of $D(S)^\circ$. Now if $S$ is not a face graph,  we remove from $S$ all the edges and vertices of 
$S$ that are not incident to faces in $F(S)$, and we obtain a face graph $S_0$. Then we define the \emph{inner boundary walk} of $S$ by
$b_i S = b_i S_0$ using \eqref{E:innerwalk}. See Figure~\ref{F:bS} for the difference between the usual boundary walk $bS$ and the inner boundary walk $b_i S$.
In fact, in the proof of Theorem~\ref{CGBT2} we will obtain a new region by \emph{shrinking} $D(S)$, then
$b_i S$ will be the walk that is topologically equivalent to the boundary of this new region.
\begin{figure}[t]
\begin{center}
\input{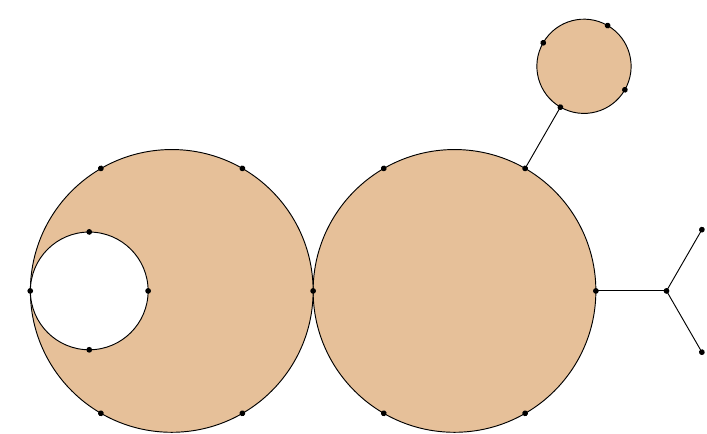_t}
\caption{A subgraph $S$ with 21 vertices $v_0, v_1, \ldots, v_{20}$ on the boundary $bS$. For this graph we can write $bS=\Gamma_1 \cup \Gamma_2$ with
$\Gamma_1 = [ v_0, v_1, \ldots, v_5, v_4, v_6, v_4, v_3, v_7, \ldots, v_{11}, v_8, v_7, v_{12}, v_{0}, v_{13}, \ldots, v_{17},v_0]$ and 
$\Gamma_2 = [v_{15}, v_{18}, v_{19}, v_{20}, v_{15}]$. For the inner boundary walk $b_i S$ we have 
$b_i S = \gamma_1 \cup \gamma_2 \cup \gamma_3$ with $\gamma_1 = [v_0,  \ldots, v_3, v_7, v_{12}, v_0]$, 
$\gamma_2 = [v_0, v_{13}, v_{14}, v_{15}, v_{18}, v_{19}, v_{20}, v_{15}, v_{16}, v_{17}, v_0]$, and $\gamma_3 = [v_8, \ldots, v_{11}, v_8]$.}\label{F:bS}
\end{center}
\end{figure}

Now we are ready to present the second version of the Gauss-Bonnet formula we need. Note that if $b_i S$ is written as in \eqref{E:innerwalk},
we define $\tau_i (b_i S) = \sum_{j=1}^l \tau_i (\gamma_j)$.

\begin{theorem}[Combinatorial Gauss-Bonnet Theorem-Type II]\label{CGBT2}
Suppose $G$ is an infinite tessellation and $S \subset G$ a finite subgraph of $G$. Then we have
\begin{equation}\label{GBF-2}
\kappa(S^{-}) + \tau_{i} (b_i S) = \chi (D(S)^\circ),
\end{equation}
where $\chi (D(S)^\circ)$ denotes the Euler characteristic of $D(S)^\circ$.
\end{theorem}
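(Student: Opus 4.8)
The plan is to mimic the proof of Theorem~\ref{CGBT1}, but this time obtaining a region by \emph{shrinking} $D(S)$ rather than expanding it. First I would reduce to the case where $S$ is a face graph: since $S^-$, $D(S)^\circ$, and $b_iS$ are all unchanged when we pass from $S$ to the face graph $S_0$ obtained by deleting all edges and vertices of $S$ not incident to a face of $F(S)$, it suffices to prove \eqref{GBF-2} for face graphs. (If $F(S)=\emptyset$ then $S_0$ is empty, $D(S)^\circ=\emptyset$, $S^-=\emptyset$, and $b_iS=\emptyset$, so both sides are $0$; this degenerate case should be noted separately.)

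Assuming $S$ is a face graph, regard $D(S)$ as a subset of the polyhedral surface $\mathcal{S}_G$. For sufficiently small $\epsilon>0$, let $Q$ be the region obtained from $D(S)$ by removing the open $\epsilon$-neighborhood of $bS$ (equivalently, the closure of the set of points of $D(S)^\circ$ at distance $>\epsilon$ from the complement), again sharpening any reflex corners that appear. I would then observe three things, exactly paralleling the proof of Theorem~\ref{CGBT1}: (i) the boundary $bQ$ is, up to homotopy rel the metric, the union of paths traversed by a walker standing one step to the \emph{left} of $b_iS$, so its total geodesic curvature is $\tau(bQ)=2\pi\cdot\tau_i(b_iS)$ by the interpretation of inner left turns given after \eqref{innerLT}; (ii) the interior $Q^\circ$ contains exactly the vertices of $G$ lying in the interior of $D(S)$, i.e.\ the vertices of $S^-$, and since $\mathcal{S}_G$ is flat away from vertices, $\omega(Q^\circ)=2\pi\sum_{v\in V(S^-)}\kappa(v)=2\pi\cdot\kappa(S^-)$ by \eqref{E:IC}; (iii) $Q$ deformation retracts onto $D(S)^\circ$ (both are obtained by ``opening up'' $D(S)$ at its boundary), so $\chi(Q)=\chi(D(S)^\circ)$. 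Feeding these into the Gauss--Bonnet theorem for polyhedral surfaces, $\omega(Q^\circ)+\tau(bQ)=2\pi\cdot\chi(Q)$, and dividing by $2\pi$ yields \eqref{GBF-2}.

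The routine bookkeeping here is the verification of (i), namely that shrinking $D(S)$ by $\epsilon$ produces a boundary whose turning near each $v_k\in V(b_iS)$ is precisely $2\pi\,\tau_i(v_k;v_{k-1},v_{k+1})$ with the faces $g_j$ on the \emph{left} of $[v_{k-1},v_k,v_{k+1}]$ being the ones swept out; this is the mirror image of the computation already sketched for $\tau_o$, including the special case $v_{k-1}=v_{k+1}$ handled by case (c) of Figure~\ref{F:leftturn} and the convention $\tau_i([v_0])=\kappa(v_0)-1$ for zero-length boundary cycles. The genuinely delicate point — the one I expect to be the main obstacle — is (iii), i.e.\ checking that the shrunk region $Q$ really has the homotopy type of $D(S)^\circ$ and not of $D(S)$ or of something else. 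The subtlety is exactly the one flagged in the text around Figure~\ref{F:bS}: a component $D_j$ of $D(S)^\circ$ may have boundary components of $bS$ that ``pinch'' at a vertex (such as $\gamma_2$ in the figure), so one must be careful that the $\epsilon$-shrinking separates $Q$ into pieces matching the components of the $D_j$'s, not the components of the $\overline{D}_j$'s. Making this rigorous amounts to showing that for small enough $\epsilon$ the inclusion $Q\hookrightarrow D(S)^\circ$ is a homotopy equivalence, which follows because $D(S)^\circ$ is an open surface (possibly disconnected) exhausted by such $Q$'s, but it should be stated with enough care to cover the pinching phenomenon; once $\chi(Q)=\chi(D(S)^\circ)$ is in hand, the rest is immediate from \eqref{GBT-S}.
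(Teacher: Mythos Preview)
Your proposal is correct and follows essentially the same approach as the paper: form the region $P=D(S)\setminus(bS)_\epsilon$ (with corners sharpened) inside $\mathcal{S}_G$, identify $\omega(P^\circ)=2\pi\kappa(S^-)$, $\tau(bP)=2\pi\tau_i(b_iS)$, and $\chi(P)=\chi(D(S)^\circ)$, and then apply the polyhedral Gauss--Bonnet formula \eqref{GBT-S}. Your write-up is in fact more careful than the paper's, which asserts the three identifications without further comment; your explicit attention to the pinching issue in (iii) and the degenerate case $F(S)=\emptyset$ are welcome additions.
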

Recall that we have $\chi (D(S)^\circ) = 2 - l$ when $D(S)^\circ$ is connected and $\mathbb{R}^2 \setminus D(S)^\circ$ has $l$ connected components. 
For the case that $D(S)^\circ$ is disconnected, we have $\chi (D(S)^\circ) = \chi (D_1) + \cdots + \chi (D_m)$, where  $D_1, \ldots, D_m$ are the connected components of $D(S)^\circ$.
For example we have $\chi (D(S)^\circ) = 1+ 1+ 1 =3$ for the subgraph $S$ in Figure~\ref{F:bS}, although we have $\chi(S) = 2-2 =0$. Also note that $\chi(S^-)$ is
in general different from $\chi (D(S)^\circ)$ (cf.\ Figure~\ref{F:inoutedge}).

\begin{proof}[Proof of Theorem~\ref{CGBT2}]
As in the proof of Theorem~\ref{CGBT1} we regard $D(S)$ as a set in $\mathcal{S}_G$, and let $(bS)_\epsilon$ be the (open) $\epsilon$-neighborhood of $bS$
for sufficiently small $\epsilon$. Let $P$ be the region in $\mathcal{S}_G$ which is obtained by sharpening corners of $D(S) \setminus (bS)_\epsilon$.
Then definitely $\chi (P) = \chi (D(S)^\circ)$, $\omega (P^\circ) = 2 \pi \cdot \kappa(S^-)$, and $\tau (bP) = 2 \pi  \cdot \tau_{i} (b_i S)$.
Therefore \eqref{GBF-2} comes from \eqref{GBT-S} as in the proof of Theorem~\ref{CGBT1}.
\end{proof}

We need one more combinatorial Gauss-Bonnet formula for the proof of Theorem~\ref{T2}. 
Suppose $S$ is a \emph{face} subgraph of $G$ such that $D(S)^\circ$ is connected, and let $T$ be the induced subgraph of $G$ with
$V(T) = V \setminus V(S)$. Then $T$ must be infinite, but it is not difficult to see that $bT$ is finite.
Note that $T$ could be disconnected, but every component of $D(T)$ is simply connected because $D(S)^\circ$ is connected.
Therefore we can write 
\begin{equation} \label{E:gamma_j}
b T = \gamma_1 + \gamma_2 + \cdots + \gamma_m,
\end{equation}
 where $m$ is the number of the components of $D(T)$ and each $\gamma_j$ is a cycle corresponding to the topological boundary of a component of $D(T)$. 
Now we recall that $S^+$ is the face graph consisting of the faces incident to the vertices of $S$, and define another boundary walk $b_1 S^+$ of $S^+$ by
$$b_1 S^+ := -b T,$$ 
where the negative sign represents the opposite orientation. 

The reason why we introduce $b_1 S^+$  is because we want an \emph{inner boundary walk} of $S^+$  passing through \emph{all} the vertices in $V(S^+) \setminus V(S)$
and enclosing exactly  the vertices in $V(S)$. The boundary walk $b_i S^+$ fails to do such a job, because there could be a vertex $v \in V(S^+) \setminus V(S)$
such that the other vertices of $S^+$ hide $v$ from the boundary (cf. Figure~\ref{F:hiddenvertex}). 
Each face of $S^+$, however, is incident to at least one vertex of $S$, hence faces of $S^+$
are not included in $D(T)$ and consequently no vertex in $V(S^+)$ lies in the interior of $D(T)$.
It is also not difficult to see that every vertex in $V(b_1 S^+)=V(bT)$ belongs to $V(S^+)$ since they are incident to faces of $S^+$.
Since $V(bT) \cap V(S) = \emptyset$ by the definition of $T$, we conclude that $V(b_1 S^+) = V(S^+) \setminus V(S)$. Also it is clear that 
the set of vertices enclosed by the walk $b_1 S^+$  is exactly $V(S)$; i.e., $b_1 S^+$ is the boundary walk we want. Now we are ready to provide our last
version of the combinatorial Gauss-Bonnet formula.

\begin{figure}[t]
\begin{center}
 \input{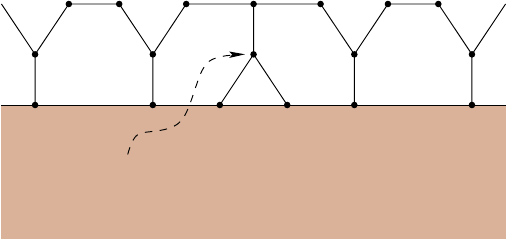_t}
 \caption{The case that a vertex in $V(S^+) \setminus V(S)$ does not lie on $b_i S^+$.}\label{F:hiddenvertex}
\end{center}
\end{figure}

\begin{theorem}[Combinatorial Gauss-Bonnet Theorem-Type III]\label{CGBT3}
Suppose $G$ is an infinite tessellation, and let $S$ be a face subgraph of $G$ such that $D(S)^\circ$ is connected.
Also suppose that the induced subgraph $T$ of $G$ with $V(T) = V \setminus V(S)$ has $m$ components. 
Then we have
\begin{equation}\label{GBF-3a}
 \kappa(S) + \tau_{i} (b_1 S^+) = 2-m.
\end{equation}
\end{theorem}
\begin{proof}
Sharpening the corners of the closed $\epsilon$-neighborhood of $D(T)$ and applying \eqref{GBT-S} 
to the complement of this sharpened region  (where the complement is taken in $\mathcal{S}_G$),
one can easily check that \eqref{GBF-3a} follows. we leave the details to readers.
\end{proof}

In general the natural number $m$ in Theorem~\ref{CGBT3} cannot be described by $\chi (S)$, $\chi(D(S^+)^\circ)$, etc. See Figure~\ref{F:diffEuler}.
Also remark that some cycles in $b_1 S^+$ (i.e., $-\gamma_j$ for some $j =1,2, \ldots, m$, where $\gamma_j$ is a cycle in \eqref{E:gamma_j}) 
might be constant, say $[v]$ for some $v \in V$, for which the inner left turn was defined by $\tau_i ([v]) = \kappa(v) -1$. 

\begin{figure}[t]
\begin{center}
 \input{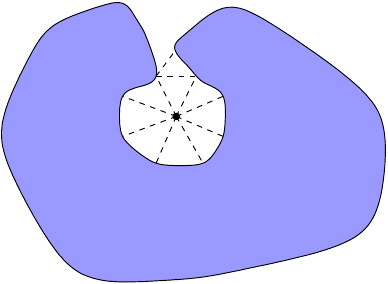_t}
 \caption{The case where $m$, the number of components of $D(T)$, cannot be described in terms of the Euler characteristics of $S$, $D(S^+)^\circ$, etc. 
 In the above figure we assume that $\chi(S) = \chi (D(S^+)^\circ)=1$, but we have $m=2$.}\label{F:diffEuler}
\end{center}
\end{figure}

When the subgraph $S$  is either connected or a polygon,  statements similar to Theorem~\ref{CGBT2} (and perhaps  Theorem~\ref{CGBT1} as well)  can be found
in \cite{BP01, BP06, DeMo07, KP11}. This means that what is new in this section lies in that Theorems~\ref{CGBT1},  \ref{CGBT2}, and \ref{CGBT3} 
have been stated to cover various situations and the formulae have been described not by inequalities but by equalities. 
We also believe that, even though there are no serious proofs in this section, these theorems  are the most important parts throughout the paper.
Some of these theorems were also extensively used  to determine circle packing types of disk triangulation graphs \cite{Oh22}.

\begin{remark*}
One can seek combinatorial proofs for Theorems~\ref{CGBT1}, \ref{CGBT2},  and \ref{CGBT3}, for which we have used a geometric formula \eqref{GBT-S}. 
While combinatorial proofs may be possible, we believe that such proofs would involve many complicated cases. 
For instance, the theorems are described not only in terms of the graph's combinatorial properties but also its topological properties. 
Therefore, proving them solely through combinatorial methods is likely to be a challenging task.
\end{remark*}

\section{A lemma}\label{S:Lemma}
In this section we prove a lemma which will play an important role in the proof of Theorem~\ref{T1}. 
Let us first introduce some more concepts.

Suppose a tessellation of the plane $G=(V, E, F)$ is given, and let $S$ be a finite subgraph of $G$. Assume that 
$b_i S = \gamma_1 \cup \gamma_2 \cup \cdots \cup \gamma_m$ is the inner
boundary walk of $S$, where $\gamma_j$'s  are as explained in the previous section. For $j \in \{1,2, \ldots, m\}$, 
let  $\gamma_j = [v_0, v_1, \ldots, v_n=v_0]$ for some vertices $v_0, v_1, \ldots, v_{n-1}$.
Now if there exists an edge $e$ that is incident to $v_k$ and lies on the left of $[v_{k-1}, v_k, v_{k+1}]$, where we have the convention $v_{n+1} = v_1$ if $k =n$ as before, 
we will call such $e$ an \emph{inward} edge and denote by $\mathfrak{e}(v_k; v_{k-1}, v_{k+1})$ the number of  inward edges incident to $v_k$. We also define
$\mathfrak{e}(\gamma_j) = \sum_{k=1}^n \mathfrak{e}(v_k; v_{k-1}, v_{k+1})$ and $\mathfrak{e} (S) = \sum_{j=1}^m \mathfrak{e}(\gamma_j)$.
Similarly let $b S = \Gamma_1 \cup \cdots \cup \Gamma_{m'}$ and  $\Gamma_j = [w_0, \ldots, w_{n'} = w_0]$ for $j \in \{1,2, \ldots, m'\}$.
If $n' \geq 2$ and there exists an edge $e$ that is incident to $w_k$ and lies on the right of $[w_{k-1}, w_k, w_{k+1}]$, we call such $e$ an \emph{outward} edge and
denote by $\mathcal{E}(w_k; w_{k-1}, w_{k+1})$ the number of outward edges incident to $w_k$. We also let 
$\mathcal{E} (\Gamma_j) = \sum_{k=1}^{n'} \mathcal{E}(w_k; w_{k-1}, w_{k+1})$ in this case. If $n'=0$, that is, if $\Gamma_j  =[w_0]$ for some $w_0 \in V$,
then we define $\mathcal{E}(\Gamma_j)$ as the set of all edges incident to $w_0$; i.e., we define $\mathcal{E}(\Gamma_j) = |E( w_0)|= \deg w_0$. 
Finally let $\mathcal{E}(S) = \sum_{j=1}^{m'} \mathcal{E}(\Gamma_j)$. Note that $\mathcal{E}(S) = | \partial S|$ if $S$ is induced, but in general only
the inequality $\mathcal{E}(S) \geq | \partial S|$ holds.

\begin{lemma}\label{ML1}
Suppose $G=(V,E,F)$ is a plane graph such that $p \leq \deg v < \infty$ and $q \leq \deg f < \infty$ for all $v \in V$ and $f \in F$, where $p$ and $q$
are natural numbers satisfying $1/p + 1/q \leq 1/2$. Let $S$ be a finite nonempty subgraph of $G$ such that $\mathbb{R}^2 \setminus D(S)$ has only one component
and $S^{-}$ contains at least two vertices. Then we have
\begin{equation}\label{E:ML1}
 |V(b_i S)| \geq (pq -2p -2q)|V(S^-)|  + |V(b S^{-})| + 2q.
\end{equation}
\end{lemma}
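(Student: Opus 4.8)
The plan is to use the two combinatorial Gauss--Bonnet formulae (Theorems~\ref{CGBT1} and \ref{CGBT2}) applied to the subgraph $S^{-}$ and to relate the various boundary walks via a careful edge count. Since $\mathbb{R}^2\setminus D(S)$ has one component and $S^{-}$ has at least two vertices, $S^{-}$ is a nonempty finite subgraph whose boundary walk $bS^{-}$ and whose set of inward edges can be compared against the inner boundary walk $b_i S$. First I would record the elementary identity that $|V(b_iS)|$ counts, up to multiplicities coming from cut vertices, the vertices of $S$ adjacent to a face \emph{outside} $F(S)$; the key is that each such vertex on $b_i S$ is reached from $V(S^{-})$ along an inward edge, so $|V(b_iS)|$ is controlled below by $\mathfrak{e}(S^{-})$ together with the vertices already sitting on $bS^{-}$. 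Concretely I expect the combinatorial heart to be an inequality of the shape
\[
|V(b_iS)| \;\geq\; \mathfrak{e}(S^{-}) + |V(bS^{-})|,
\]
obtained by charging each inward edge at a boundary vertex of $S^{-}$ to a distinct vertex of $b_iS$, plus possibly $2q$ extra vertices coming from the face that lies between $bS^{-}$ and $b_iS$ in the one-complementary-component situation.

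Next I would estimate $\mathfrak{e}(S^{-})$ from below in terms of $|V(S^{-})|$ and $|V(bS^{-})|$. This is where the degree hypotheses $\deg v\geq p$, $\deg f\geq q$ and $1/p+1/q\leq 1/2$ enter. Using Euler's formula for the simply connected region $D(S^{-})^{\circ}$ (so $\chi=1$, hence $|V|-|E|+|F|=1$ counting only interior faces), together with the handshake-type inequalities $2|E(S^{-})|\geq p|V(S^{-}_{\mathrm{int}})| + (\text{boundary contributions})$ and $q|F(S^{-})|\leq 2|E(S^{-})|$, one gets a linear lower bound for the number of edges incident to interior vertices. Each interior vertex of $S^{-}$ — i.e.\ each vertex of $V(S^{-})\setminus V(bS^{-})$, which by definition is $V((S^{-})^{-})=V(S^{--})$ — contributes its full degree $\geq p$, and each boundary vertex of $S^{-}$ contributes at least its inward edges. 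Feeding the face-degree bound $\deg f\geq q$ through, the coefficient $pq-2p-2q=(p-2)(q-2)-4$ should drop out of the arithmetic exactly as it does in Theorem~\ref{T:PQreg}; I would carry the bookkeeping so that the ``$+|V(bS^{-})|$'' and the ``$+2q$'' survive cleanly rather than being absorbed.

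The step I expect to be the main obstacle is the multiplicity/charging argument: because $S$ need not be induced and $b_iS$ can traverse a vertex several times (as with $v_0$ and $v_8$ in Figure~\ref{F:bS}), one must be careful that the map from inward edges of $S^{-}$ (and from boundary vertices of $S^{-}$) to occurrences of vertices on $b_iS$ is genuinely injective, and that the $2q$ vertices contributed by the outermost surrounding face are not double-counted. I would handle this by working in the polyhedral surface $\mathcal{S}_G$ and tracking the walk $b_iS$ as the topological boundary of the shrunken region used in the proof of Theorem~\ref{CGBT2}: each time $b_iS$ passes a vertex $v$, it does so ``around'' one of the faces incident to $v$ that lies outside $F(S)$, and distinct passages correspond to distinct such faces, which in turn account for distinct inward edges emanating from the interior. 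Once the injectivity is pinned down, combining it with the Euler-formula lower bound on $\mathfrak{e}(S^{-})$ yields \eqref{E:ML1}. A secondary technical point is verifying that the hypothesis ``$\mathbb{R}^2\setminus D(S)$ has one component'' forces $D(S^{-})$ to behave well enough (in particular that $bS^{-}$ is a single cycle, or at worst a union of cycles whose total length is what appears as $|V(bS^{-})|$), so that the surrounding face genuinely contributes a full $q$ vertices on each side — giving the $+2q$ — rather than something smaller.
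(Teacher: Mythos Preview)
Your proposal has a genuine gap: the central ``charging'' inequality
\[
|V(b_iS)| \;\geq\; \mathfrak{e}(S^{-}) + |V(bS^{-})|
\]
(with or without the extra $2q$) is simply false. Take $G$ to be the square lattice $((p,q)=(4,4))$ and let $S^{-}$ be a path of five collinear vertices, so that $S=(S^{-})^{+}$ is a $7\times 3$ block of vertices. Then $|V(b_iS)|=16$, while the number of edges leaving $S^{-}$ is $12$ and $|V(bS^{-})|=5$, giving $12+5=17>16$. The point is that a single vertex on $b_iS$ can absorb many edges coming out of $S^{-}$, so no injection of the kind you describe exists; the multiplicity worry you flagged is not a technicality but the actual obstruction.

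The paper's argument does \emph{not} try to charge edges to vertices directly. Instead it applies the Gauss--Bonnet formula of Type~II (Theorem~\ref{CGBT2}) to $S$ itself (not to $S^{-}$): bounding $\kappa(S^{-})$ from above via $\deg v\geq p$, $\deg f\geq q$, and bounding the inner turns $\tau_i(b_iS)$ via $\deg f\geq q$, one obtains
\[
|V(b_iS)| \;\geq\; \tfrac{pq-2p-2q}{2}\,|V(S^{-})| \;+\; \tfrac{q-2}{2}\,\mathfrak{e}(S) \;+\; q,
\]
where $\mathfrak{e}(S)$ counts inward edges from $b_iS$. Separately, the Type~I formula (Theorem~\ref{CGBT1}) is applied to $S^{-}$, using the ``pretended'' device that faces outside $S^{-}$ may be treated as having degree exactly $q$ (their contributions to $\kappa$ and to $\tau_o$ cancel in \eqref{GBF-1}); this yields
\[
|V(bS^{-})| + q + \tfrac{pq-2p-2q}{2}\,|V(S^{-})| \;\leq\; \tfrac{q-2}{2}\,\mathcal{E}(S^{-}).
\]
The two displays are then linked by the elementary observation $\mathfrak{e}(S)\geq \mathcal{E}(S^{-})$ (every outward edge from $bS^{-}$ is an inward edge from $b_iS$, since $S^{-}$ is induced), and adding them gives \eqref{E:ML1}. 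So the factor $\tfrac{q-2}{2}$ on the edge count, which comes from the turn estimate and which your charging argument replaces by $1$, is exactly what makes the arithmetic close; and the pretended-computation trick for the outer formula on $S^{-}$, which you do not mention, is what lets the face-degree hypothesis enter on that side.
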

\begin{proof}
To show \eqref{E:ML1}, we may assume that $S$ is connected by considering each components of $S$ separately. For example if $S$ has two connected components,
then we will have $4q$ instead of $2q$ in the last term of \eqref{E:ML1}, but anyway \eqref{E:ML1} will be true as long as we prove it for connected subgraphs. 
Thus $S$ will be regarded as a simply connected subgraph, because  $\mathbb{R}^2 \setminus D(S)$ was assumed to have only one component. 

For $v \in V(S^-)$ we have
\begin{align*}
\kappa(v) & = 1 - \frac{\deg v}{2} + \sum_{f \in F(v)} \frac{1}{\deg f} \leq 1 - \frac{\deg v}{2} +  \frac{\deg v}{q} \\
                 & = 1 - \frac{q-2}{2 q} \cdot \deg v \leq - \frac{pq - 2p -2q}{2q},
 \end{align*}
because $\deg v \geq p$ and $\deg f \geq q \geq 3$ for every $f \in F$. Thus we have
\begin{equation}\label{E1-1}
\kappa (S^{-}) \leq - \frac{pq - 2p -2q}{2q} \cdot |V(S^{-})|.
\end{equation}
We next let $b_i S = \gamma_1 \cup \cdots \cup \gamma_m$, where $\gamma_j$'s are cycles as defined before. Note that, since $S$ is simply connected, 
$m = \chi (D(S)^\circ) \geq 1$ and each $\gamma_j$ must be a nonconstant simple cycle enclosing a component of $D(S)^\circ$. 
Let $\gamma_j = [v_0, v_1, \ldots, v_n]$  for some $j \in \{1,2, \ldots, m\}$. 
Then for each $k \in \{1,2, \ldots, n \}$, using the notation $\mathfrak{e}_k = \mathfrak{e}(v_k; v_{k-1}, v_{k+1})$, we have from \eqref{innerLT}  that 
\[
\tau_i (v_k; v_{k-1}, v_{k+1}) \leq \frac{1}{2} - \frac{\mathfrak{e}_k +1}{2} + \frac{\mathfrak{e}_k +1}{q} = \frac{2-q}{2q} \cdot \mathfrak{e}_k+ \frac{1}{q},
\]
because $\deg f \geq q$ for every $f \in F$ and $(\mathfrak{e}_k+1)$ is the number of faces lying on the left of $[v_{k-1}, v_k, v_{k+1}]$.
Thus we get 
\begin{equation}\label{Temp1}
 \tau_i (\gamma_j) \leq \frac{|\gamma_j|}{q}- \frac{q-2}{2q} \cdot \mathfrak{e}(\gamma_j) \quad \mbox{and} \quad  
 \tau_i (b_i S) \leq \frac{|b_i S|}{q} -  \frac{q-2}{2q}  \cdot \mathfrak{e}(S).
\end{equation}
 
Here a naive approach is to set $|b_i S| = |V(b_i S)|$, but this is in general not true because a vertex may appear in two or more cycles in $\gamma_1, \ldots, \gamma_m$. 
Note that $| b_i S|$ denotes the length of the inner boundary walk, while $|V(b_i S)|$ denotes the number of vertices in $b_i S$. However, if a vertex $v$ appears in more than
one cycle, simply connectedness of $S$ implies that such $v$ must be a \emph{cut vertex} of $S$; i.e., $v$ is a vertex such that $S \setminus \{ v \}$ becomes disconnected. 
Now if  $v_1, v_2, \ldots, v_n$ are the vertices in $V(b_i S)$ that appear in $l_1, l_2, \ldots, l_n$ cycles, where each $l_j$ is at least $2$, then we must have
$(l_1 -1) + (l_2 -1) + \cdots + (l_n -1) +1 \leq m$ because removing $v_j$ from $S$ makes the number of components increase by $l_j -1$ and  the number of components 
of $S \setminus \{ v_1, v_2, \ldots, v_n \}$ cannot exceed  that of $D(S)^\circ$.  Therefore we have
\begin{equation}\label{E:compnumber}
\begin{aligned}
|b_i S| - |V(b_i S)| &  = (l_1 -1) + (l_2 -1) + \cdots + (l_n -1) \\
 &\leq m-1  = \chi (D(S)^\circ) -1,
\end{aligned}
\end{equation}
 hence from \eqref{Temp1} we obtain
\begin{equation}\label{Temp2}
 \tau_i (b_i S) \leq \frac{|b_i S|}{q} -  \frac{q-2}{2q} \mathfrak{e}(S) \leq \frac{|V(b_i S)|+\chi (D(S)^\circ) -1}{q} -  \frac{q-2}{2q} \cdot  \mathfrak{e}(S).
\end{equation}
Now \eqref{E1-1}, \eqref{Temp2}, and the Gauss-Bonnet formula of the second type \eqref{GBF-2} imply that
\begin{equation}\label{E1}
\begin{aligned}
 |V(b_i S)| & \geq \frac{pq - 2p -2q}{2}\cdot |V(S^{-})| + \frac{q-2}{2} \cdot  \mathfrak{e}(S) + (q-1) \chi (D(S)^\circ) +1 \\
                   & \geq \frac{pq - 2p -2q}{2}\cdot |V(S^{-})| + \frac{q-2}{2} \cdot  \mathfrak{e}(S) + q,
\end{aligned}
\end{equation}
because $\chi (D(S)^\circ) =m \geq 1$.
 
In the next step we apply the Gauss-Bonnet formula of the first type \eqref{GBF-1} to the subgraph $S^-$, the induced subgraph whose vertices are in $D(S)^\circ$, and obtain
\begin{equation}\label{GBF-*}
\kappa(S^-) + \tau_{o} (b S^-) = \chi (S^-).
\end{equation}
Let $bS^- = \Gamma_1 \cup \cdots \cup \Gamma_{m'}$ as before. Then because $S$ is simply connected, $m'$ should be the number of components of $S^-$
and we have $m' = \chi (S^{-})$. That is, simply connectedness of $S$ implies that $D(S^{-})$ does not have \emph{holes}, and 
each $\Gamma_j$ must be the cycle enclosing a component of $D(S^-)$. We also remark that some
of $\Gamma_j$'s could be constant cycles. 

Now we will \emph{pretend} that every face outside $S^-$ is of degree $q$. In other words, in \eqref{GBF-*} we replace $\deg f$ by $q$ for all $f \notin F(S^-)$ whenever they appear,
and we see that the equality \eqref{GBF-*} still holds as explained in Remark~\ref{R-face}. 
Then the inequality \eqref{E1-1} definitely holds even in our pretended computation. Next for some $j \in \{1,2, \ldots, m' \}$, 
we assume $\Gamma_j = [w_0, w_1, \ldots, w_{n'}]$ with $n' \geq 2$. By \eqref{outerLT} our pretended computation gives 
\[
  \tau_o (w_k; w_{k-1}, w_{k+1}) \approx   \frac{\mathcal{E}_k +1}{2} - \frac{\mathcal{E}_k +1}{q}  - \frac{1}{2} = \frac{q-2}{2q} \cdot \mathcal{E}_k - \frac{1}{q}
\]
with the notation $\mathcal{E}_k =  \mathcal{E}(w_k; w_{k-1}, w_{k+1})$. Here the symbol `$\approx$' means that we are doing pretended computation; i.e., 
the computation is performed after we have replaced $\deg f$ by $q$ for $f \notin F(S^-)$. Therefore we have
\begin{equation}\label{Temp3}
\tau_o (\Gamma_j) = \sum_{k=1}^{n'} \tau_o (w_k; w_{k-1}, w_{k+1}) \approx \frac{q-2}{2q} \cdot \mathcal{E} (\Gamma_j) - \frac{|\Gamma_j|}{q}.
\end{equation}
If $\Gamma_j = [w_0]$ for some $j$, then we have $|\Gamma_j| =0$ and get
\begin{equation}\label{Temp4}
\tau_o (\Gamma_j) = 1- \kappa(w_0) \approx \frac{\deg w_0}{2} - \frac{\deg w_0}{q} =  \frac{q-2}{2q} \cdot \mathcal{E} (\Gamma_j) - \frac{|\Gamma_j|}{q}.
\end{equation}
Now \eqref{E1-1}, \eqref{Temp3}, \eqref{Temp4}, and \eqref{GBF-*} yield 
\[
|b S^-| + q \cdot\chi (S^-) + \frac{pq - 2p -2q}{2}\cdot |V(S^{-})|- \frac{q-2}{2} \cdot \mathcal{E}(S^-) \leq 0.
\]

Let $m_0$ be the number of components of $S^{-}$ consisting of single vertices. Then definitely we have $m_0 \leq m' = \chi(S^{-})$. 
But because $|V(b S^{-})| \leq |b S^-|+ m_0$,  we get 
\[
|V(b S^{-})|+ q \leq  |b S^-|+ m_0 + q \leq |b S^-| + m' q   =|b S^-| + q \cdot\chi (S^-) 
\]
unless $S^{-}$ itself is a single vertex, which is the case we have excluded in the assumption. Therefore we obtain
\begin{equation}\label{E2}
|V(b S^{-})|+ q  + \frac{pq - 2p -2q}{2}\cdot |V(S^{-})|- \frac{q-2}{2} \cdot \mathcal{E}(S^-) \leq 0.
\end{equation}

To finish the proof of Lemma~\ref{ML1}, suppose $e$ is an outward edge from a vertex on $b S^{-}$. 
Then a priori $e$ has an end on $S^{-}$. If the other end of $e$ belongs to $S^{-}$, then $e$ must be an edge in $E(S^{-})$ because $S^{-}$ is an induced subgraph,
hence the edge $e$ cannot be an outward edge. This means that the other end of $e$ lies on $b_i S$, hence we must have  $\mathfrak{e}(S) \geq \mathcal{E}(S^-)$. 
See Figure~\ref{F:inoutedge} for the case $\mathfrak{e}(S) > \mathcal{E}(S^-)$. Now one can easily obtain \eqref{E:ML1}  
by adding \eqref{E1} to \eqref{E2}, and this completes the proof of Lemma~\ref{ML1}.
\end{proof}
\begin{figure}[t]
\begin{center}
 \input{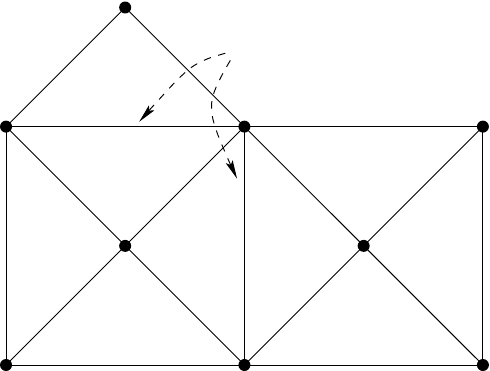_t}
 \caption{A subgraph $S$ with $\chi(S^-) =2$, $\chi (D(S)^\circ) =1$, $\mathfrak{e}(S) =12$, and $\mathcal{E}(S^-)=8$.}\label{F:inoutedge}
\end{center}
\end{figure}

Note that the previous proof also shows that 
\begin{equation}\label{E:ML1-1}
 |V(b_i S)| \geq (pq -2p -2q)|V(S^-)|  + |V(b S^{-})| + 2q-1
\end{equation}
if $S^{-}$ is a single vertex, but it can also be verified easily using direct computation.

\section{Proof of Theorem~\ref{T1}}\label{ProofT1}
In this section we will prove Theorem~\ref{T1}, so throughout the section it is assumed that $G=(V,E,F)$ is a plane graph such that 
$p \leq \deg v < \infty$ and $q \leq \deg f < \infty$ for every $v \in V$ and $f \in F$, 
where $p$ and $q$ are natural numbers satisfying $1/p + 1/q \leq 1/2$.  But because there is nothing to prove when $1/p + 1/q = 1/2$, in which case we have $\Phi (p, q)= \Phi (q, p)  =0$,
we will further assume that $1/p + 1/q < 1/2$. Note that this condition is equivalent to
\begin{equation}\label{E:pq-cond}
pq -2p - 2q = (p-2) (q-2) - 4 >0.
\end{equation}

Let $S$ be a finite nonempty subgraph of $G$. Because we want to prove the inequality 
\begin{equation}\label{E:isof}
\frac{|b S|}{\sum_{f \in F(S)} \deg f} \geq  \frac{\Phi(q, p)}{q} =  \frac{q-2}{q} \sqrt{1 - \frac{4}{(p-2)(q-2)}},
\end{equation}
we may assume without loss of generality that $S$ is a simply connected face graph, since otherwise we can remove all the edges and vertices of $S$ which
are not incident to faces in $F(S)$, add to $S$ all the vertices, edges, and faces included in the closures of the bounded components of $\mathbb{R}^2 \setminus D(S)$,
and consider each components of $S$ separately.
That is, we may assume that $S$ is a polygon, and in this case the quantity $|b S|$ will be the same as the number of edges in $E(S)$ that are incident to faces in $F \setminus F(S)$.

First it is not difficult to see that
\[
\sum_{f \in F(S)} \deg f \leq 2|E(S)| - |bS|.
\]
Then because $S$ is simply connected and  $\deg f \geq q$ for all $f \in F$,  Euler's formula implies
\begin{align*}
1 & = |V(S)| - |E(S)| + |F(S)| \\ 
   & \leq |V(S)| - \frac{1}{2} \cdot \sum_{f \in F(S)}  \deg f - \frac{1}{2} \cdot|bS|+\frac{1}{q} \cdot  \sum_{f \in F(S)} \deg f,
\end{align*}
or we have
\begin{equation}\label{E:inequal-1}
\frac{q-2}{2q} \cdot \sum_{f \in F(S)} \deg f \leq |V(S)| - \frac{1}{2} \cdot |b S| -1.
\end{equation}

Let $N \geq 0$ be the depth of $S$.  (See the last paragraph in Section~\ref{prelim} for the definition of depth.) 
If $N=0$ then it is true that $|V(S)| \leq |bS|$, so \eqref{E:inequal-1} implies $(q-2)/q \leq |bS|/(\sum_{f \in F(S)} \deg f)$. Thus \eqref{E:isof} definitely holds in this case. Now let us assume that $N \geq 1$.

Define $S_ N = S$, and inductively we define $S_{N-k} = (S_{N-k+1})^-$ for $k =1,2, \ldots, N$. We also let $s_n = |V(b S_n)|$ for $n =0,1,2, \ldots, N$.
Then it is not difficult to see that $s_0 + s_1 + \cdots + s_n = |V(S_n)|$ for every $n \in \{0,1, \ldots, N \}$, because $(S_0)^- = \emptyset$ by the definition of depth.
Moreover, for $n \geq 2$ the graph $S_n$ must satisfy the assumptions of Lemma~\ref{ML1}, because we have assumed that $S$ is a simply connected face graph.
Now since $V(b_i T) \subset V(b T)$ for every subgraph $T \subset G$, 
\eqref{E:ML1} and \eqref{E:ML1-1} imply that
\begin{equation}\label{s_n}
\begin{cases}
s_n \geq s_{n-1} +  (pq -2p -2q) ( s_0  + \cdots + s_{n-1}) + 2q \quad \mbox{ for } n = 2, \ldots, N,\\
s_1 \geq s_0 + (pq -2p -2q) s_0 + 2q -1, \\
s_0 \geq 1.
\end{cases}
\end{equation}

We will compare $\{ s_n \}_{n=0}^N$ with the sequence $\{ a_n \}_{n=0}^N$ defined by 
\begin{equation}\label{a_n}
\begin{cases}
a_n = a_{n-1} +  (pq -2p -2q) ( a_0  + \cdots + a_{n-1}) + 2q \quad \mbox{ for } n = 2, \ldots, N,\\
a_1 = a_0 + (pq -2p -2q) a_0 + 2q -1, \\
a_0 =  t_0,
\end{cases}
\end{equation}
where $t_0$ is  determined as follows. 
Set $a_0 = t \in \mathbb{R}$, and observe that $a_1$ is a linear function in $t$ with positive coefficients.
Therefore $a_2 = a_1 + (pq -2 p -2q)(a_0 + a_1 ) +2q$ is also a linear function in $t$ with positive coefficients, and inductively we see that 
$a_N$ is a linear function in $t$ with positive coefficients as well. Thus 
there exists $t= t_0 \in \mathbb{R}$ that makes $a_N = s_N$, and we will use such $t_0$  in \eqref{a_n}.

If $t_0 = a_0 < s_0$, then definitely $a_n < s_n$ for every $n=1,2, \ldots, N$, contradicting our assumption $a_N = s_N$. Therefore $1 \leq s_0 \leq a_0$.
Suppose $$s_0 + s_1 + \cdots + s_{k-1} \leq a_0 + a_1 + \cdots + a_{k-1}$$ for some $k \in \{1, 2\ldots, N \}$. If
$$s_0 + s_1 + \cdots + s_{k} > a_0 + a_1 + \cdots + a_{k},$$ then definitely we have $s_k > a_k$. Therefore \eqref{s_n} and \eqref{a_n} imply that
$s_{k+1} > a_{k+1}$ and $s_0 + s_1 + \cdots + s_{k+1} > a_0 + a_1 + \cdots + a_{k+1}$. By repeating this argument we obtain the inequality $s_N > a_N$,
which contradicts our choice of $t_0$. Therefore we have 
\begin{equation}\label{induction}
s_0 + s_1 + \cdots + s_{k} \leq a_0 + a_1 + \cdots + a_{k},
\end{equation}
and by induction we see that \eqref{induction} is true for all $k =0,1,2, \ldots, N-1$, especially for the case $k = N-1$.
 
Let $P = p-2$ and $Q= q-2$, and note that $PQ -4 > 0$ by \eqref{E:pq-cond}. Also we see from \eqref{a_n} that the sequence $\{ a_n \}$ satisfies the recurrence relation
\begin{equation}\label{recur}
a_n  - (PQ -2)a_{n-1}  + a_{n-2} =0
\end{equation}
for $n \geq 3$. Let $\alpha = \frac{1}{2} \left\{ PQ -2 + \sqrt{(PQ-2)^2 -4} \right\} > 1$, and we observe that $\alpha$ and $1/\alpha$ are the zeros of the
characteristic polynomial $x^2 - (PQ-2) x + 1$ of the recurrence relation \eqref{recur}.  Here our goal is to show the inequality
\begin{equation}\label{inequal-2}
a_0 + a_1 + \cdots + a_{N-1} \leq \frac{a_N}{\alpha-1}.
\end{equation}
Suppose \eqref{inequal-2} is true. Then from \eqref{E:inequal-1} and \eqref{induction} we get
\begin{align*}
\frac{q-2}{2q} & \cdot \sum_{f \in F(S)} \deg f  \leq |V(S)| - \frac{1}{2} \cdot |b S| -1 \leq s_0 + s_1 + \cdots + s_N - \frac{s_N}{2}  \\
    & =  (s_0 + s_1 + \cdots + s_{N-1}) + \frac{s_N}{2} \leq  (a_0 + a_1 + \cdots + a_{N-1} ) + \frac{a_N}{2}  \\
    & \leq \frac{a_N}{\alpha-1} + \frac{a_N}{2}  = \frac{1}{2} \cdot \frac{\alpha +1}{\alpha -1} \cdot s_N \leq \frac{1}{2} \cdot \frac{\alpha +1}{\alpha -1} \cdot |b S|.
\end{align*}
Note that $\alpha^2 + 1 = (PQ -2) \cdot \alpha$ and $\alpha - \alpha^{-1} = \sqrt{(PQ-2)^2 -4}$ because $\alpha$ and $1/\alpha$ are the zeros of the polynomial
$x^2 - (PQ-2) x + 1$. Thus we have  
\begin{align*}
\frac{|b S|}{\sum_{f \in F(S)} \deg f} & \geq \frac{q-2}{q} \cdot \frac{\alpha -1}{\alpha+1} =  \frac{q-2}{q} \cdot \frac{\alpha^2 -1}{(\alpha+1)^2} 
     = \frac{q-2}{q} \cdot \frac{\alpha (\alpha - \alpha^{-1})}{\alpha^2 +1 + 2 \alpha} \\
    & = \frac{q-2}{q} \cdot \frac{\alpha \cdot \sqrt{(PQ-2)^2 -4}}{PQ \cdot \alpha} 
     = \frac{q-2}{q} \sqrt{\frac{(PQ)\cdot (PQ-4)}{(PQ)^2}}\\
    & =   \frac{q-2}{q} \sqrt{1 - \frac{4}{(p-2)(q-2)}},
\end{align*}
so \eqref{E:isof} follows.

Since $\alpha$ and $1/\alpha$ are the zeros of the polynomial $x^2 - (PQ-2) x + 1$, we have from \eqref{recur} that
\begin{equation}\label{recur2}
a_n - \alpha \cdot a_{n-1} = \frac{1}{\alpha} ( a_{n-1} - \alpha \cdot a_{n-2})
\end{equation}
for $n \geq 3$, and we obtain from \eqref{a_n}  that
\begin{equation}\label{recur3}
a_2 - \alpha \cdot a_{1} = \frac{1}{\alpha} ( a_{1} - \alpha \cdot a_{0})+1.
\end{equation}
We claim that 
\begin{equation}\label{E:claim}
a_n  \leq \alpha \cdot a_{n-1} + 2 \alpha q
\end{equation}
 for all $n  = 1,2, \ldots$. In fact, the second row of \eqref{a_n} can be read as
\begin{equation}\label{E:casen=1}
a_1 = a_0 + \left( \alpha + \frac{1}{\alpha} -2 \right) a_0 + 2q -1 = \left( \alpha + \frac{1}{\alpha} -1 \right) a_0 + 2q -1,
\end{equation}
hence we have
\[
a_1 - \alpha \cdot a_0 = \left( \frac{1}{\alpha} -1 \right) a_0 + 2q -1 \leq 0 \cdot a_0 + 2q -1 = 2q -1\leq 2 \alpha q
\]
because $\alpha \geq 1$ and $a_0 \geq 1>0$. Therefore \eqref{recur3} implies
\[
a_2 - \alpha \cdot a_1 = \frac{1}{\alpha} ( a_{1} - \alpha \cdot a_{0})+1 \leq \frac{2q-1}{\alpha} +1 \leq (2q -1) +1 = 2q \leq 2 \alpha q.
\]
For $n \geq 3$ we have from \eqref{recur2} that
\begin{align*}
a_n -  & \alpha \cdot a_{n-1}  = \frac{1}{\alpha} ( a_{n-1} - \alpha \cdot a_{n-2}) = \frac{1}{\alpha^2} ( a_{n-2} - \alpha \cdot a_{n-3}) \\
                                             & = \cdots = \frac{1}{\alpha^{n-2}} ( a_{2} - \alpha \cdot a_{1}) \leq   \frac{2 \alpha q}{\alpha^{n-2}} \leq 2 \alpha q,
\end{align*}                                             
so the claim has been proved.

If $N=1$, the inequality \eqref{inequal-2} easily comes from \eqref{E:casen=1}.
If $N \geq 2$, the first row of  \eqref{a_n} can be written as 
\[
\left( \alpha + \frac{1}{\alpha} -2 \right) (a_0 + a_1 + \cdots + a_{N-1}) = a_N - a_{N-1} -2q.
\]
But the claim \eqref{E:claim} implies that
\[      
 a_N - a_{N-1} -2q  \leq a_N - \frac{a_N}{\alpha}  + \frac{2 \alpha q}{\alpha}  -2q     = \frac{\alpha -1}{\alpha} \cdot a_N,
\]
so we have
\begin{align*}
a_0 + a_1 + & \cdots + a_{N-1} \leq      \left( \alpha + \frac{1}{\alpha} -2 \right) ^{-1} \cdot     \frac{\alpha -1}{\alpha} \cdot  a_N \\
& = \frac{\alpha}{(\alpha-1)^2} \cdot \frac{\alpha -1}{\alpha} \cdot  a_N = \frac{a_N}{\alpha -1},
\end{align*}               
which proves \eqref{inequal-2}. We conclude that \eqref{E:isof} holds for every $S \subset G$ such that $F(S) \neq \emptyset$.

\begin{proof}[Proof of Theorem~\ref{T1}]
Since the inequality \eqref{E:isof} holds for every nonempty finite subgraph $S$ with $F(S) \neq \emptyset$, we definitely  have  $\imath_\sigma ^*(G) \geq \Phi (q, p)/q$.
The inequality $\imath_\sigma (G)  = \imath_\sigma ^*(G^*) \geq \Phi (p,q)/p$ comes from the duality. Finally because $\sum_{v \in V(S)} \deg v \geq p \cdot |V(S)|$ and
 $\sum_{v \in F(S)} \deg f \geq q \cdot |F(S)|$, it is easy to see that $\imath (G) \geq \Phi (p, q)$ and $\imath^* (G) \geq \Phi(q, p)$. This completes the proof of Theorem~\ref{T1}.
\end{proof}

\section{Proof of Theorem~\ref{T2}}\label{ProofT2}
In this section we will prove Theorem~\ref{T2}, and our strategy is to follow each steps in Sections~\ref{S:CGB}--\ref{ProofT1} with inequalities reversed. 
One can check that our proof of Theorem~\ref{T2} is almost the same as that of Theorem~\ref{T1}, but there are some details we need to treat carefully.
 
Suppose $G=(V,E,F)$ is an infinite tessellation of the plane such that $\deg v \leq p$ and $\deg f \leq q$ for all $v \in V$ and $f \in F$, where $p$ and $q$
are natural numbers satisfying $1/p + 1/q \leq 1/2$. Choose $f_0 \in F$, and for $n \in \mathbb{N}$ let $\mathcal{B}_n := \mathcal{B} (f_0)$ be the quasi-ball with the core $f_0$ and height $n$. 
(See Section~\ref{prelim} for the definition of quasi-balls.) We also set  $\mathcal{B}_0 = f_0$, the face graph with $F(\mathcal{B}_0) = \{ f_0 \}$. 
Now we fix $n \in \{0 \} \cup \mathbb{N}$, and note that $\chi (\mathcal{B}_n) \leq 1$ because $\mathcal{B}_n$ is connected. Moreover we can write 
$b \mathcal{B}_n = \Gamma_1 \cup \Gamma_2 \cup \cdots \cup \Gamma_m$  with $m = 2- \chi (\mathcal{B}_n)$, where each $\Gamma_j$ 
corresponds to the boundary of a component of $\mathbb{R}^2 \setminus D(\mathcal{B}_n)$ as before. Definitely each $\Gamma_j$ is not constant, and it must be  simple  
because $\mathcal{B}_n$ is a face graph and $D(\mathcal{B}_n)$ has a connected interior; i.e., $\mathcal{B}_n$ has no cut vertex. 
See (a) and (b) of Figure~\ref{F:cannotheppen} for the cases which cannot happen for $\mathcal{B}_n$.

Suppose $\Gamma_j = [v_0, v_1, \ldots, v_l= v_0]$, and we denote by $\mathcal{E}_k = \mathcal{E}(v_k; v_{k-1}, v_{k+1})$ the number of edges incident to
$v_k$ and lying on the right of $[v_{k-1}, v_k, v_{k+1}]$.  Next we will do the \emph{pretended computation} as in the proof of Theorem~\ref{T1}. 
That is, we apply \eqref{GBF-1} to $\mathcal{B}_n$ and obtain
\begin{equation}\label{GBF-**}
\kappa (\mathcal{B}_n ) +  \tau_o (b \mathcal{B}_n) = \chi (\mathcal{B}_n).
\end{equation}
Then we replace $\deg f$ by $q$ for all $f \notin F(\mathcal{B}_n )$  and observe that the equality \eqref{GBF-**} remains true  as explained in Remark~\ref{R-face}.
That is, we can pretend that every face outside $\mathcal{B}_n$ is of degree $q$. Then our pretended computation gives
\[
\tau_o ( v_k; v_{k-1}, v_{k+1}) \approx \frac{\mathcal{E}_k +1}{2} - \frac{\mathcal{E}_k +1}{q} - \frac{1}{2} = \frac{q-2}{2q} \cdot \mathcal{E}_k - \frac{1}{q},
\]
hence we obtain
\[
 \tau_o (b \mathcal{B}_n) \approx \frac{q-2}{2q} \cdot \mathcal{E}(\mathcal{B}_n) - \frac{|b \mathcal{B}_n|}{q}
\]
with the notation $\mathcal{E}(\Gamma_j) = \sum_{k=1}^l \mathcal{E}(v_k; v_{k-1}, v_{k+1})$ and 
$\mathcal{E} (\mathcal{B}_n) = \sum_{j=1}^m \mathcal{E}(\Gamma_j)$.
On the other hand, for $v \in V(\mathcal{B}_n)$ we have 
\[
\kappa (v) = 1 - \frac{\deg v}{2} + \sum_{f \in F(v)} \frac{1}{\deg f} \gtrsim 1 - \frac{q-2}{2q} \cdot \deg v \geq - \frac{pq -2 p -2q}{2q},
\]
 where the notation `$\gtrsim$' means that we have  inequality `$\geq$' in our pretended computation. Therefore
\[
\kappa (\mathcal{B}_n ) \gtrsim - \frac{pq -2 p -2q}{2q} \cdot |V(\mathcal{B}_n)|,
\]
and \eqref{GBF-**} implies
\begin{equation}\label{EE1}
q \cdot \chi (\mathcal{B}_n) \geq - \frac{pq -2p -2q}{2} \cdot |V(\mathcal{B}_n)| + \frac{q-2}{2} \cdot \mathcal{E}(\mathcal{B}_n) - |b \mathcal{B}_n|.
\end{equation}
\begin{figure}[t]
\begin{center}
\subfigure[]{\epsfig{figure=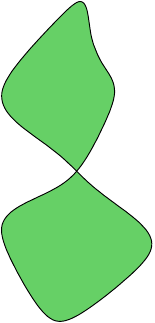, width=1. in}} \hspace{1 cm}
\subfigure[]{\epsfig{figure=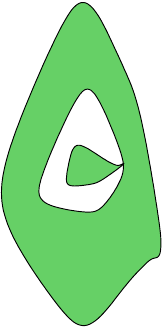, width=1. in}} \hspace{1 cm}
\subfigure[]{\epsfig{figure=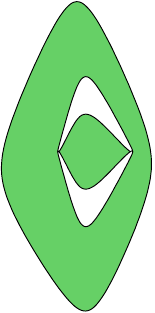, width=1. in}} \\
\subfigure[]{\epsfig{figure=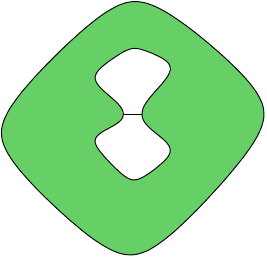, width=1.5 in}} \hspace{1.2 cm}
\subfigure[]{\epsfig{figure=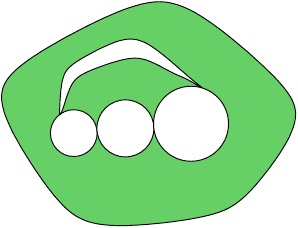, width=1.5 in}} 
\caption{The cases that cannot happen for $\mathcal{B}_n$.}\label{F:cannotheppen}
\end{center}
\end{figure}

Recall that each $\Gamma_j$ is simple and corresponds to the boundary of a component of $\mathbb{R}^2 \setminus D(\mathcal{B}_n)$. 
Then since  the interior of $D(\mathcal{B}_n)$ is connected,
we see that $\Gamma_j \cap \Gamma_{j'}$ has at most a component for $j \ne j'$. See Figure~\ref{F:cannotheppen}(c) which cannot happen for $\mathcal{B}_n$.
But if $\Gamma_j \cap \Gamma_{j'} \ne \emptyset$, then the intersection must be a vertex since $\mathcal{B}_n$ is a face graph and every edge in $E(\mathcal{B}_n)$ is 
incident to  at least one face in $F(\mathcal{B}_n)$ (Figure~\ref{F:cannotheppen}(d)). Furthermore, one can check that the above argument can be extended as follows:  
if $\Gamma_{j_1} \cup \Gamma_{j_2} \cup \cdots \cup \Gamma_{j_t}$ is connected and
$(\Gamma_{j_1} \cup \Gamma_{j_2} \cdots \cup \Gamma_{j_t}) \cap \Gamma_{j'} \ne \emptyset$ for some $j' \notin \{ j_1, j_2, \ldots, j_t \}$,
then the intersection has only one component and it must be a vertex (Figure~\ref{F:cannotheppen}(e)).
We conclude that if a vertex $v$ appears in $l$ distinct cycles in $\Gamma_1, \Gamma_2, \ldots, \Gamma_m$, then 
subtracting $v$ from $D(\mathcal{B}_n)$ makes the number of components of $\mathbb{R}^2 \setminus D(\mathcal{B}_n)$ decrease by $l-1$, hence we have 
\begin{equation}\label{bnchi}
|b \mathcal{B}_n | - |V (b \mathcal{B}_n) | \leq m -1 = 2 - \chi(\mathcal{B}_n) -1 = 1 - \chi(\mathcal{B}_n)
\end{equation}
as we did in \eqref{E:compnumber}. Consequently we deduce from \eqref{EE1} the inequality
\begin{equation}\label{E5}
0  \leq  |V(b \mathcal{B}_n)| +  q + \frac{pq - 2p -2q}{2}\cdot |V(\mathcal{B}_n)|- \frac{q-2}{2} \cdot \mathcal{E}(\mathcal{B}_n)
\end{equation}
because $\chi (\mathcal{B}_n) \leq 1$.

In the next step we need an inequality similar to \eqref{E1}. Note that $\mathcal{B}_n$ is a face subgraph such that $D(\mathcal{B}_n)^\circ$ is connected. Since $\mathcal{B}_{n+1} = \mathcal{B}_n^+$, the Gauss-Bonnet formula of the third type \eqref{GBF-3a} can be read as 
\begin{equation}\label{GBF-3}
 \kappa(\mathcal{B}_n) + \tau_{i} (b_1 \mathcal{B}_{n+1}) = 2-m',
\end{equation}
where $m'$ is the number of components of the induced subgraph $\mathcal{T}_n$ satisfying $V(\mathcal{T}_n) = V \setminus V(\mathcal{B}_n)$.
Also we have $V(b_1 \mathcal{B}_{n+1})=V(\mathcal{B}_{n+1}) \setminus V(\mathcal{B}_{n}) $, for which we have introduced the boundary walk $b_1 \mathcal{B}_{n+1}$.

Now we perform  computation  similar to that in Section~\ref{S:Lemma}, and we have
\[
\kappa (\mathcal{B}_n ) \geq - \frac{pq -2 p -2q}{2q} \cdot |V(\mathcal{B}_n)|
\]
and 
\[
\tau_i (b_1 \mathcal{B}_{n+1}) \geq - \frac{q-2}{2q} \cdot \mathfrak{e}(\mathcal{B}_{n+1}) + \frac{|b_1 \mathcal{B}_{n+1}|}{q},
\]
where $\mathfrak{e}(\mathcal{B}_{n+1}) $ denotes the number of inward edges from $b_1 \mathcal{B}_{n+1}$. 
Also note that at least one component of $\mathcal{T}_n$ is different from a vertex, hence at least one cycle consisting of $b_1 \mathcal{B}_{n+1}$  is not constant. Therefore we have
\[
|b_1 \mathcal{B}_{n+1}| + (m' -1) \geq |V(b_1 \mathcal{B}_{n+1})|.
\]
But since
\[
q(2-m') + m' -1 = q + (m'-1)(1-q) \leq q,
\]
we obtain from \eqref{GBF-3} that 
\begin{equation}\label{E6}
|V(b_1 \mathcal{B}_{n+1})| \leq \frac{pq-2p -2q}{2} \cdot |V(\mathcal{B}_n)| + \frac{q-2}{2} \cdot \mathfrak{e}(\mathcal{B}_{n+1})  + q.
\end{equation}
Finally note that $\mathfrak{e}(\mathcal{B}_{n+1}) \leq  \mathcal{E}(\mathcal{B}_n)$ because $\mathcal{T}_n$ is induced; i.e., each inward edge from $b_1 \mathcal{B}_{n+1}$ must be an outward edge
from $b \mathcal{B}_n$. Thus by adding \eqref{E5} to \eqref{E6} we get
\begin{equation}\label{E7}
|V(b_1 \mathcal{B}_{n+1})| \leq (pq-2p -2q) |V(\mathcal{B}_n)| + |V(b \mathcal{B}_n)| + 2q,
\end{equation}
which is what we have wanted to derive.

Suppose  $1/p + 1/q = 1/2$. Then \eqref{E7} can be read as $$|V(b_1 \mathcal{B}_{n+1})| \leq |V(b_1 \mathcal{B}_n)| + 2q$$ because $pq-2p-2q=0$ and
$ V(b \mathcal{B}_n) \subset V(b_1 \mathcal{B}_n)$ for all $n =0,1,2, \ldots$. Then since the combinatorial balls $B_n := B_n (v_0)$ are included in 
$\mathcal{B}_n = \mathcal{B}_n (f_0)$ for some $v_0 \in V(f_0)$,  the number of vertices in  $B_n$ grows at most polynomially. 
But in $G$ the vertex degrees are uniformly bounded by $p$, so if the inequality $\imath(G) >0$ held then combinatorial balls should grow exponentially 
(cf.\ see \eqref{E:ball-sphere} in Section~\ref{S:triangulation}, and the explanation of this phenomenon there). Therefore we must have $\imath (G)=0$, and
it is also not difficult to show, using duality and simple computation, that  all the other isoperimetric constants  in \eqref{E:isoconst} are zero. 
Thus the statements in Theorem~\ref{T2} follow in this case. Now we will assume that $1/p + 1/q < 1/2$ for the rest of this section.

Set $b_0 = |V(\mathcal{B}_0)| = \deg f_0$ and $b_n = |V(b_1 \mathcal{B}_{n})|$ for $n \in \mathbb{N}$. Then because $|V(\mathcal{B}_n) |= b_0 + b_1 + \cdots + b_n$ 
and $|V(b \mathcal{B}_n)| \leq |V(b_1 \mathcal{B}_{n})|= b_n$, we have from \eqref{E7} that
\[
\begin{cases}
b_n \leq b_{n-1} +  (pq -2p -2q) ( b_0  + \cdots + b_{n-1}) + 2q \quad \mbox{ for } n \in \mathbb{N},\\
b_0 \leq q.
\end{cases}
\]
Let $\{ a_n \}$ be the sequence satisfying
\begin{equation}\label{a_n2}
\begin{cases}
a_n = a_{n-1} +  (pq -2p -2q) ( a_0  + \cdots + a_{n-1}) + 2q \quad \mbox{ for } n \in \mathbb{N},\\
a_0 = t_0,
\end{cases}
\end{equation}
where $t_0$ is a real number which makes $a_N = b_N$ for some sufficiently large $N$. Here $N$ is to be determined later, and beware that $t_0$ could be a negative number. 
We also set $P=p-2$, $Q=q-2$, and $\alpha = \frac{1}{2} \left\{ PQ -2 + \sqrt{(PQ-2)^2 -4} \right\} > 1$.

It is clear that $a_0 \leq b_0$, since otherwise we would have the contradiction $a_N > b_N$. Then one can prove, using mathematical induction as in the previous section, that
\begin{equation}\label{order}
a_0 + a_1 + \cdots + a_k \leq b_0 + b_1 + \cdots + b_k
\end{equation}
for all $k=0,1, \ldots, N-1$. We next claim that
\begin{equation}\label{E:claimx}
a_n \geq \alpha \cdot a_{n-1}
\end{equation}
for all $n$. Note that \eqref{a_n2} implies
\[
a_1 = a_0 + \left( \alpha + \frac{1}{\alpha} -2 \right) a_0 + 2q,
\]
or
\[
a_1 - \alpha \cdot a_0 = \left( \frac{1}{\alpha} -1 \right) a_0 + 2q \geq  \left( \frac{1}{\alpha} -1 \right) q + 2q \geq q >0
\]
because $a_0 \leq b_0 \leq q$ and $\alpha > 1$. Thus the claim \eqref{E:claimx} easily follows since $a_n$ satisfies the equation \eqref{recur2}; i.e., it follows because
\[
a_n - \alpha \cdot a_{n-1} = \frac{1}{\alpha} ( a_{n-1} - \alpha \cdot a_{n-2})
\]
for all $n \geq 2$. Therefore by \eqref{a_n2} and \eqref{E:claimx} we have
\[
 \left( \alpha + \frac{1}{\alpha} -2 \right)(a_0 + a_1 + \cdots + a_{n-1}) =  a_n -a_{n-1} - 2q \geq  \left( 1 - \frac{1}{\alpha} \right)  a_n -2q,
\]
 or
\begin{equation}\label{E9}
a_0 + a_1 + \cdots + a_{n-1} \geq \frac{a_n}{\alpha -1} - \frac{2q\alpha}{(\alpha-1)^2}.
\end{equation} 

Let $\epsilon>0$ be given, and we choose $N$ such that
\begin{equation}\label{N}
\sum_{f \in F(\mathcal{B}_N)} \deg f > \frac{1}{\epsilon} \left( \frac{4 q \alpha}{\alpha^2 -1} + \frac{2(\alpha-1)}{\alpha +1} \right).
\end{equation}
Because $\mathcal{B}_N$ is a face graph, we have
\begin{equation}\label{Eul}
\sum_{f \in F(\mathcal{B}_N)}\deg f = 2|E(\mathcal{B}_N)| - |b \mathcal{B}_N|.
\end{equation}
 Then since $q \cdot |F(\mathcal{B}_n)| \geq \sum_{f \in F(\mathcal{B}_n)}\deg f$, Euler's formula and \eqref{Eul} yield
\[
\begin{aligned}
\chi (\mathcal{B}_N)  & = |V(\mathcal{B}_N)| - |E(\mathcal{B}_N)| + |F(\mathcal{B}_N)| \\
                                      & \geq |V(\mathcal{B}_N)| - \frac{1}{2} \cdot \sum_{f \in F(\mathcal{B}_N)}\deg f - \frac{1}{2} \cdot |b \mathcal{B}_N| + \frac{1}{q} \cdot \sum_{f \in F(\mathcal{B}_N)}\deg f \\
\end{aligned}
\]
or
\begin{equation}\label{F1}
\frac{q-2}{2q} \cdot\sum_{f \in F(\mathcal{B}_N)}\deg f \geq  |V(\mathcal{B}_N )|- \frac{|b \mathcal{B}_N|}{2} -\chi(\mathcal{B}_N).
\end{equation}
Note that $|V(\mathcal{B}_N )| = b_0 + b_1 + \cdots + b_N$ and $a_N = b_N$. We also have $b_N \geq |b \mathcal{B}_N| -1 + \chi (\mathcal{B}_N)$ from \eqref{bnchi},
because $b_N = |V(b_1 \mathcal{B}_N)| \geq |V(b \mathcal{B}_N)|$. 
Thus  we have from \eqref{order}, \eqref{E9}, and \eqref{F1} that 
\begin{align*}
\frac{q-2}{2q} \cdot & \sum_{f \in F(\mathcal{B}_N)}\deg f \geq (b_0 + b_1 + \cdots + b_{N-1}) + b_N -  \frac{|b \mathcal{B}_N|}{2} -\chi(\mathcal{B}_N) \\
 &  \geq  (a_0 + a_1 + \cdots + a_{N-1}) +  \frac{|b \mathcal{B}_N|}{2} -1 \geq \frac{a_N}{\alpha-1} +  \frac{|b \mathcal{B}_N|}{2} -1-  \frac{2q\alpha}{(\alpha-1)^2}\\
                                     & \geq \frac{|b \mathcal{B}_N| -1 + \chi (\mathcal{B}_N)}{\alpha-1} +  \frac{|b \mathcal{B}_N|}{2} -1-  \frac{2q\alpha}{(\alpha-1)^2}.
\end{align*}

We need to get rid of $\chi (\mathcal{B}_N)$ from the above inequality. For this purpose, we consider  the subgraph $\mathcal{A}_N \subset G$ which is obtained  
by adding to $\mathcal{B}_N$ all the vertices, edges, and faces included in the closures of the bounded components of  $\mathbb{R}^2 \setminus D(\mathcal{B}_N)$. 
That is, we fill the holes of $\mathbb{R}^2 \setminus D(\mathcal{B}_N)$ to obtain $\mathcal{A}_N$.  Then because the number of bounded components of
$\mathbb{R}^2 \setminus D(\mathcal{B}_N)$ is $1- \chi(\mathcal{B}_N)$, a sloppy estimate yields 
$|b \mathcal{B}_N| \geq |b \mathcal{A}_N| +1 -  \chi(\mathcal{B}_N) \geq  |b \mathcal{A}_N|$ and
$\sum_{f \in F(\mathcal{B}_N)}\deg f  \leq \sum_{f \in F(\mathcal{A}_N)}\deg f$. 
Thus we have
\begin{align*}
\frac{q-2}{2q} \cdot & \sum_{f \in F(\mathcal{A}_N)}\deg f  \geq \frac{|b \mathcal{A}_N|}{\alpha-1} +  \frac{|b \mathcal{A}_N|}{2} -1-  \frac{2q\alpha}{(\alpha-1)^2}\\
& = \frac{\alpha+1}{2(\alpha-1)} \cdot |b \mathcal{A}_N|  -  \frac{2q\alpha}{(\alpha-1)^2} -1.
\end{align*}
Now \eqref{N} implies that 
\begin{equation}\label{squid}
\frac{|b \mathcal{A}_N |}{ \sum_{f \in F(\mathcal{A}_N)}\deg f } \leq \frac{\Phi(q, p)}{q} + \epsilon  =  \frac{q-2}{q} \sqrt{1 - \frac{4}{(p-2)(q-2)}} + \epsilon.
\end{equation}
We conclude that $\imath_\sigma (G) \leq \Phi(q,p)/q$ because $\epsilon >0$ is arbitrary.
Finally the other inequalities in Theorem~\ref{T2} can be easily deduced from \eqref{squid}, using duality and simple computation, 
and we left them to readers. This completes the proof of Theorem~\ref{T2}.

\section{Vertex isoperimetric constants}\label{S:triangulation}
Our bounds for isoperimetric constants in Theorems~\ref{T1} and \ref{T2} were obtained by considering quasi-balls $\mathcal{B}_n$ instead of combinatorial balls $B_n$. 
In the proof of Theorem~\ref{T2} we directly  used quasi-balls, and our proof of Theorem~\ref{T1} was performed with quasi-balls in mind throughout. 
For example, we have come up with the idea of using \emph{depth}  because we wanted to study quasi-balls.  Moreover,  it seems that combinatorial balls never give isoperimetric constants 
as explained in \cite[Remark~4.3]{HJL}.  But there are still lots of problems stated in terms of combinatorial balls, and our method using quasi-balls does not work 
for such problems in general.  For example, our method does not give any solution for finding  the \emph{exponential growth rate}  defined by
\[
\mu(G) := \limsup_{n \to \infty} \frac{1}{n} \ln |V(B_n)|.
\]

On the other hand, in literature there is another type of isoperimetric constants other than those in \eqref{E:isoconst}. 
Let $S$ be a finite subgraph of a tessellation $G=(V, E, F)$ of the plane,
and let $d_0 S$ be the set of vertices in $V(S)$ which have neighbors in $V \setminus V(S)$. Also define $d_1 S$ as the set of vertices in $V \setminus V(S)$ which
have neighbors in $V(S)$. Then the \emph{vertex isoperimetric constants} are defined by
\[
\jmath_0 (G) = \inf_S \frac{|d_0 S|}{|V(S)|} \quad \mbox{and} \quad \jmath_1 (G) = \inf_S \frac{|d_1 S|}{|V(S)|},
\]
where  infima are taken over all finite nonempty subgraphs $S$.
The constants $\jmath_0 (G)$ and $\jmath_1 (G)$  are studied in geometric group theory \cite{CDP, GhyHar, HP, Gro}, and they also appear in many other literature concerning graphs
\cite{Alon, BS, Doz, Gel87, Gel, HJL, HP20, LP16, Moh02, Oh14, Oh15-2, Oh22, OS16, Soa90, Soa}. Note that these constants are related to each other
by the equation $\jmath_0 (G) =\jmath_1(G)/(1+ \jmath_1 (G))$   (cf.\ \cite[Lemma~6.1]{Oh22}). Moreover, one can  see that if $\jmath_0 (G)>0$ or $\jmath_1(G)>0$,
then all of the other isoperimetric constants $\imath (G), \imath_\sigma (G), \imath^* (G)$, and $\imath^*_\sigma (G)$ are positive (cf.\ \cite[Theorem~1(c)]{Oh14}. 
Conversely, if $\imath (G)>0$  and  vertex degrees are uniformly bounded (or $\imath^*(G)>0$ and face degrees are uniformly bounded),  we have $\jmath_0 (G)>0$ and $\jmath_1(G)>0$.

Like the exponential growth rate, our method using quasi-balls does not work well for finding exact values of  vertex isoperimetric constants.
In fact, the methods in previous sections could yield some bounds for $\jmath_0 (G)$ and $\jmath_1 (G)$, but one can check that these bounds are not sharp.
(We will describe such bounds in Theorem~\ref{T:triangulation} below, but only for triangulations.)
This phenomenon is understandable, however, because vertex isoperimetric constants are closely related to the exponential growth rate. 
Fix $v_0 \in V$, and let $B_n$ and $S_n$ be the combinatorial balls and spheres, respectively, with radius $n$ and centered at $v_0$. If $\jmath_1 (G)>0$, then
\begin{equation}\label{E:ball-sphere}
|V(B_n)| = |V(S_n)| + |V(B_{n-1})| \geq \jmath_1 (G)\cdot |V(B_{n-1})|+ |V(B_{n-1})| 
\end{equation}
for all $n \in \mathbb{N}$, hence we have $|V(B_n)| \geq (1 + \jmath_1 (G))^n$ and 
\begin{equation}\label{E:growth-iso}
\mu(G) \geq \ln (1 + \jmath_1 (G)) = \ln \left(\frac{1}{1 - \jmath_0 (G)} \right).
\end{equation}

For triangulations, however, we have a different story because quasi-balls with a vertex core become  combinatorial balls. For example, out computation immediately 
yields  the following statement.

\begin{theorem}\label{T:triangulation}
Suppose $G = (V, E, F)$ is a triangulation of the plane and let $p \geq 6$. If $\deg v \geq p$ for all $v \in V$ then
\[
\jmath_1 (G) \geq \frac{(p-6)+\sqrt{(p-2)(p-6)}}{2},
\]
and if $\deg v \leq p$ for all $v \in V$ then
 \[
\jmath_1 (G) \leq \frac{(p-6)+\sqrt{(p-2)(p-6)}}{2}.
\]
\end{theorem}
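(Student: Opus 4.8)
The plan is to specialize the machinery of Sections~\ref{S:CGB}--\ref{ProofT2} to triangulations, where $q=3$. Writing $P=p-2$ and $Q=q-2=1$, we get $PQ-2=p-4$ and $(PQ-2)^2-4=(p-2)(p-6)$, so the constant $\alpha=\frac12\left\{PQ-2+\sqrt{(PQ-2)^2-4}\right\}$ from the proofs of Theorems~\ref{T1} and~\ref{T2} becomes $\alpha=\frac{p-4+\sqrt{(p-2)(p-6)}}{2}$, and $\alpha-1=\frac{(p-6)+\sqrt{(p-2)(p-6)}}{2}$ is exactly the quantity in the statement; thus both claims reduce to $\jmath_1(G)\ge\alpha-1$ (resp.\ $\le\alpha-1$). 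I would use two facts special to triangulations: (i) a quasi-ball $\mathcal{B}_n$ with a single-vertex core $v_0$ is the combinatorial ball $B_n(v_0)$, since every face incident to a vertex of $\mathcal{B}_n$ is a triangle and hence $V(\mathcal{B}_{n+1})\setminus V(\mathcal{B}_n)$ is exactly the set of vertices at distance $n+1$ from $v_0$; and (ii) for every nonempty $S\subset G$ one has $V(S^+)=V(S)\sqcup d_1 S$ and $V(S)\subset V((S^+)^-)$, because each face at a vertex $v$ is a triangle whose two other vertices are neighbors of $v$, and all faces at $v\in V(S)$ lie in $F(S^+)$.

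\emph{Lower bound.} Assume $\deg v\ge p$ for all $v$ and fix a finite nonempty $S\subset G$. First I would replace $S$ by $T:=$ the simply connected face graph obtained from $S^+$ by filling in all bounded components of $\mathbb{R}^2\setminus D(S^+)$. Hole-filling only pushes inner-boundary vertices into the interior, so $V(bT)\subset V(bS^+)$ and $V((S^+)^-)\subset V(T^-)$; combined with (ii) and the partition $V(S^+)=V((S^+)^-)\sqcup V(bS^+)$ this gives $|d_1 S|=|V(S^+)|-|V(S)|\ge|V(bS^+)|\ge|V(bT)|$ and $|V(S)|\le|V(T^-)|$, hence $\frac{|d_1 S|}{|V(S)|}\ge\frac{|V(bT)|}{|V(T^-)|}$. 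Since distinct connected components of a face graph are vertex-disjoint, decomposing $T=\bigsqcup_i T_i$ reduces this by a mediant estimate to $\min_{i:\,T_i^-\ne\emptyset}\frac{|V(bT_i)|}{|V(T_i^-)|}$ (components with empty interior only enlarge the numerator). For a single component $T_i$, a simply connected face graph of depth $N\ge1$, set $(T_i)_N=T_i$, $(T_i)_{j}=((T_i)_{j+1})^-$ for $j<N$, and $s_k=|V(b(T_i)_k)|$; then $|V(bT_i)|=s_N$, $|V(T_i^-)|=s_0+\cdots+s_{N-1}$, and Lemma~\ref{ML1} together with \eqref{E:ML1-1} yields exactly the recursion \eqref{s_n} with $q=3$. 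The inequalities \eqref{induction} and \eqref{inequal-2} established in the proof of Theorem~\ref{T1} then give $s_0+\cdots+s_{N-1}\le\frac{s_N}{\alpha-1}$, i.e.\ $\frac{|V(bT_i)|}{|V(T_i^-)|}\ge\alpha-1$, so $\jmath_1(G)\ge\alpha-1$.

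\emph{Upper bound.} Assume $\deg v\le p$. If $p=6$ then $\alpha-1=0$ and there is nothing to prove, so take $p\ge7$; then $\imath(G)>0$ by Theorem~\ref{T1}, so the combinatorial balls $B_n=B_n(v_0)$ grow without bound and $b_n:=|V(B_n)|-|V(B_{n-1})|\to\infty$ (with $b_0=1$). By (i), $B_n=\mathcal{B}_n$ is a quasi-ball, so the argument of Section~\ref{ProofT2} with $q=3$ goes through and produces the recursion \eqref{E7}, namely $b_{n+1}\le(p-6)(b_0+\cdots+b_n)+b_n+6$, together with the comparison inequalities \eqref{order} and \eqref{E9} for the auxiliary sequence $\{a_n\}$ with $a_N=b_N$; the latter two give $b_0+\cdots+b_{N-1}\ge\frac{b_N}{\alpha-1}-c_0$ for a constant $c_0$ depending only on $p$. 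Since $b_N\to\infty$, this forces $\limsup_N\frac{b_N}{b_0+\cdots+b_N}\le\frac{\alpha-1}{\alpha}$, and because $\alpha$ is a root of $x^2-(p-4)x+1$ one checks $(p-6)+\frac{\alpha-1}{\alpha}=\alpha-1$. Hence $\frac{|d_1 B_N|}{|V(B_N)|}=\frac{b_{N+1}}{b_0+\cdots+b_N}\le(p-6)+\frac{b_N+6}{b_0+\cdots+b_N}$ has $\limsup\le\alpha-1$, giving $\jmath_1(G)\le\alpha-1$.

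\emph{Main obstacle.} The algebra is essentially forced, the two bounds meeting exactly because $(p-6)+\frac{\alpha-1}{\alpha}=\alpha-1$; the real work is the topological bookkeeping in the lower bound: checking that hole-filling keeps $T$ a face graph and can only decrease $\frac{|V(bT)|}{|V(T^-)|}$, that distinct components of a face graph share no vertices (so the mediant step is valid), and that $(T_i)_k$ for $k\ge2$ genuinely meets the hypotheses of Lemma~\ref{ML1}. On the upper-bound side the only delicate point is confirming that the estimates of Section~\ref{ProofT2}, stated there for a face core, carry over verbatim to a vertex core; they do, since $\mathcal{B}_0=\{v_0\}$ has $b_0=1\le q$, which is exactly the bound those estimates use.
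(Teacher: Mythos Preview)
Your proof is correct. The upper bound is handled exactly as in the paper: both simply invoke the estimates of Section~\ref{ProofT2} (specifically \eqref{E9}), using that for a triangulation the vertex-core quasi-balls coincide with combinatorial balls, and your check that $b_0=1\le q$ suffices for the comparison argument is right.

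For the lower bound your route is valid but more circuitous than the paper's. The paper's shortcut is the observation that in a triangulation every vertex of $bS$ lies in $d_0 S$ whenever $S$ is induced: if $v\in V(bS)\setminus d_0 S$, then at any occurrence $[\ldots,u,v,w,\ldots]$ on $bS$ the face to the right of $[u,v,w]$ must be the triangle $uvw$, so $[u,w]\in E(S)$ (as $S$ is induced), contradicting that $[u,v,w]$ appears on $bS$. Thus $|d_0 S|\ge|V(bS)|$, and \eqref{inequal-2} applied directly to $S$ yields $|V(bS)|\ge(1-1/\alpha)|V(S)|$; the bound $\jmath_1(G)\ge\alpha-1$ then follows via $\jmath_1=\jmath_0/(1-\jmath_0)$. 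Your detour through $T=S^+$ (hole-filled), with $|d_1 S|\ge|V(bT)|$ and $|V(S)|\le|V(T^-)|$, reaches the same endpoint and has the minor advantage of bounding $|d_1 S|/|V(S)|$ directly for each $S$ rather than passing through $\jmath_0$, but at the cost of the extra topological bookkeeping you flag in your final paragraph (face-graph preservation under hole-filling, vertex-disjointness of components for the mediant step, etc.). The paper's $d_0$-side argument sidesteps all of that with a single one-line claim.
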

\begin{proof}
Suppose $\deg v \geq p$ for all $v \in V$, and let $S \subset G$ be a finite \emph{induced} subgraph of $G$. 
We claim that every vertex in $b S$ belongs to $d_0 S$. In fact, if $v \in V(b S) \setminus d_0 S$, 
then there exists $u, w \in V(b S)$ such that a sequence of the form $[\ldots, u, v, w, \ldots]$ appears on $b S$ but there is no edge incident to $v$ and lying on the right of $[u, v, w]$. But
because $G$ is a triangulation, the face incident to $v$ and lying on the right of $[u, v, w]$ must be the triangle with vertices $u, v$, and $w$, hence the edge $[u, w]$ belongs to $E(S)$
and the sequence  $[u, v, w]$ cannot appear on $b S$. This contradiction proves the claim. 

Now we follow the proof of Theorem~\ref{T1}, and obtain from \eqref{inequal-2} that 
\[
|d_0 S| \geq \left( 1 - \frac{1}{\alpha} \right) \cdot |V(S)| \quad \mbox{and} \quad |d_1 S| \geq (\alpha -1) \cdot |V(S)|, 
\]
where $\alpha = \frac{1}{2} \left\{ p-4 + \sqrt{(p-4)^2 -4} \right\}$. This completes the proof of the first statement of Theorem~\ref{T:triangulation}. If $\deg v \leq p$ for all $v \in V$,
we follow Theorem~\ref{T2} and obtain from \eqref{E9} the second statement of Theorem~\ref{T:triangulation}. We leave the details to readers.
\end{proof}

Theorem~\ref{T:triangulation} answers the first half of \cite[Question~6.20]{LP16} for the case $q=3$ (i.e., the triangulation case), reconfirming a result by 
Haslegrave and Panagiotis in \cite{HP20}. To be precise, Haslegrave and Panagiotis answered the first statement of \cite[Question~6.20]{LP16} for $q=3$ and $q=4$, 
but the question is still open for general $q$. 
By the way, Theorem~\ref{T:triangulation} can  be used to obtain the exponential growth rate via \eqref{E:growth-iso}, which also confirms 
a result by Keller and Peyerimhoff in \cite{KP11}  for the case $q=3$.  Moreover, our method could give a better result for the exponential growth rate as follows. 

\begin{theorem}\label{P:prop}
Suppose $G=(V, E,F)$ is a triangulation such that $\deg v \geq 6$ for all $v \in V$, and let $B_n$  be the combinatorial balls centered at a fixed vertex $v_0$. If 
\begin{equation}\label{E:lav}
\frac{1}{|V(B_n )|} \sum_{v \in V(B_n)} \deg v  \geq c
\end{equation}
for sufficiently large $n$, where $c$ is a positive real number $> 6$, then we have 
\[
\mu(G) \geq \ln \frac{(c-4)+\sqrt{(c-2)(c-6)}}{2}.
\]
\end{theorem}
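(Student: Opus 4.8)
The plan is to re-run the quasi-ball argument behind Theorem~\ref{T1}, specialized to triangulations, but keeping the curvature sums \emph{exact} rather than using a pointwise lower bound on vertex degrees. Two facts are special to triangulations: since every face has degree $3$, one has $\kappa(v) = 1 - \deg v/2 + \deg v/3 = 1 - \deg v/6$ with equality for every $v\in V$; and for a fixed vertex $v_0$ the quasi-balls $\mathcal{B}_n(v_0)$ coincide with the combinatorial balls $B_n = B_n(v_0)$, so $B_n^- = B_{n-1}$, $D(B_n)^\circ$ is a disk, and $b_i B_n = bB_n$. Write $s_n = |V(bB_n)|$, $\sigma_n = |V(B_n)| = s_0 + s_1 + \cdots + s_n$, and $D_n = \sum_{v\in V(B_n)}\deg v$.

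First I would apply the proof of Lemma~\ref{ML1} to $S = B_n$ for $n\ge 2$ with $q=3$, with one change: at step \eqref{E1-1} I do not invoke $\deg v \ge p$ but instead use the identity above to write $\kappa(B_{n-1}) = \sigma_{n-1} - D_{n-1}/6$ exactly; moreover the ``pretended computation'' in that proof becomes an actual identity, since every outside face really has degree $3$. Tracing through the same inequalities (the bound $\mathfrak{e}(B_n) \ge \mathcal{E}(B_{n-1})$ is still available), the term $(pq-2p-2q)|V(S^-)|$ in \eqref{E:ML1} gets replaced by $D_{n-1} - 6\sigma_{n-1}$, and because $b_i B_n = bB_n$ we obtain
\[
s_n \;\ge\; s_{n-1} + \bigl(D_{n-1} - 6\sigma_{n-1}\bigr) + 6 \qquad (n\ge 2),
\]
which is just the inequality form of Proposition~\ref{proposition} with $q=3$, $S = B_{n-1}$.

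Now I would feed in the hypothesis: choose $n_0 \ge 2$ with $D_n \ge c\,\sigma_n$ for all $n\ge n_0$. Then for $n\ge n_0+1$,
\[
s_n \;\ge\; s_{n-1} + (c-6)\,\sigma_{n-1} + 6,
\]
which is precisely recurrence \eqref{s_n} with $p$ replaced by $c$ (and $q=3$). Rewriting via $s_n = \sigma_n - \sigma_{n-1}$ gives $\sigma_n \ge (c-4)\sigma_{n-1} - \sigma_{n-2} + 6$ for $n\ge n_0+1$. Let $\alpha = \frac{1}{2}\bigl\{(c-4) + \sqrt{(c-4)^2 - 4}\bigr\} = \frac{1}{2}\bigl\{(c-4) + \sqrt{(c-2)(c-6)}\bigr\} > 1$, the larger root of $x^2 - (c-4)x + 1$, so that $c-4 = \alpha + \alpha^{-1}$. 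Then $\sigma_n - \alpha^{-1}\sigma_{n-1} \ge \alpha(\sigma_{n-1} - \alpha^{-1}\sigma_{n-2})$; since $\{\sigma_n\}$ is strictly increasing and $\alpha^{-1} < 1$, the quantity $t_{n_0} := \sigma_{n_0} - \alpha^{-1}\sigma_{n_0-1}$ is positive, so by induction $\sigma_n \ge \sigma_n - \alpha^{-1}\sigma_{n-1} \ge \alpha^{\,n-n_0}\,t_{n_0}$ for all $n\ge n_0$. Hence $\mu(G) = \limsup_n \frac{1}{n}\ln\sigma_n \ge \ln\alpha$, which is the claimed bound.

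The step requiring care is the first one: checking that every inequality in the proof of Lemma~\ref{ML1} survives the replacement of the pointwise degree bound by the exact curvature sum, and that the topological bookkeeping ($D(B_n)^\circ$ a disk, $b_i B_n = bB_n$, and $\mathfrak{e}(B_n) \ge \mathcal{E}(B_{n-1})$) is valid for combinatorial balls in an arbitrary infinite triangulation; once that is in place, the remainder is an elementary comparison of a second-order linear recurrence. I would also note explicitly that it is precisely the \emph{averaged} hypothesis, rather than a pointwise one, that forces us to carry $D_n$ through the Gauss--Bonnet computation instead of bounding each $\kappa(v)$ separately.
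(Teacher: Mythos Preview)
Your proposal is correct and follows the same overall strategy as the paper: use the exact Gauss--Bonnet relation for combinatorial balls in a non-positively curved triangulation (the paper invokes Proposition~\ref{proposition} directly, citing \cite{BP01,BP06} for the fact that balls satisfy its hypotheses), feed in the averaged degree bound to obtain the recurrence $s_n \ge s_{n-1} + (c-6)\sigma_{n-1} + 6$, and compare with the linear recurrence whose characteristic root is $\alpha$.

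The only substantive difference is in the comparison step. The paper works with the sphere sizes $s_n$ and introduces an auxiliary sequence $\{a_n\}_{n \ge n_0}$ with $a_N = s_N$ (mirroring the argument of Section~\ref{ProofT1}), eventually deducing $(\alpha-1)|V(B_{N-1})| \le |V(S_N)|$ and hence $|V(B_N)| \ge \alpha\,|V(B_{N-1})|$. You instead pass immediately to the ball sizes $\sigma_n$, rewrite the recurrence as $\sigma_n \ge (c-4)\sigma_{n-1} - \sigma_{n-2} + 6$, and factor it as $\sigma_n - \alpha^{-1}\sigma_{n-1} \ge \alpha(\sigma_{n-1} - \alpha^{-1}\sigma_{n-2})$. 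This is a cleaner and more elementary route to the same growth estimate, avoiding the auxiliary sequence entirely; the paper's version has the advantage of reusing machinery already set up in Section~\ref{ProofT1}.
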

\begin{proof}
Let $S_n$ be the combinatorial sphere centered at $v_0$, and we observe that every vertex in $S_n$ has a neighbor in $S_{n+1}$. This is because 
$G$ is a non-positively curved triangulations, hence  the distance function on $V$ cannot have local maxima (see \cite{BP01} or \cite{BP06}).
Therefore  $(B_n)^+= B_{n+1}$ and $(B_{n+1})^- = B_n$ for every $n$. Moreover, $B_n$ is simply connected and $b B_n = b_i B_{n}$ is a simple cycle.  
Thus we see that each inequality in the proof of Lemma~\ref{ML1} becomes an equality in our case, and obtain
\[
|V(b B_{n+1})| = 6+ |b B_n| - 6 \sum_{v \in V(B_n)} \kappa(v),
\]
or
\begin{equation}\label{cp-ref}
s_{n+1}  = 6+ s_n +\sum_{v \in V(B_n)} (\deg v - 6) 
\end{equation}
for  $n =1,2, \ldots$, where we used the notation $s_n = |V(S_n)|$. (See Section~8 of \cite{Oh22} for a more detailed explanation of \eqref{cp-ref}.)
Therefore \eqref{E:lav} implies  
\[
s_{n+1} \geq 6 +s_n +(c-6) (s_0 + s_1 + \cdots + s_n)
\]
for sufficiently large $n$, say $n \geq n_0$. Let $N \geq n_0$, and define $\{ a_n \}_{n=n_0}^N$ as the sequence satisfying 
\[
\begin{cases}
a_{n+1} =  6+ a_{n} +   (c-6) (s_0 + \cdots + s_{n_0-1} + a_{n_0} + a_{n_0 +1} + \cdots + a_n)  \quad \mbox{ for } n \geq n_0,\\
a_{n_0} =t,
\end{cases}
\]
where $t$ is defined so that $a_N = s_N$. Then we have $\sum_{k=n_0}^{N-1} s_k \leq \sum_{k=n_0}^{N-1} a_k$ as before. 
Let $$\alpha = \frac{1}{2} \left\{ c-4 + \sqrt{(c-4)^2 -4} \right\}>1,$$ and observe that 
\[
a_{n_0 +1} - \alpha \cdot a_{n_0} = 6 + \left( \frac{1}{\alpha} -1 \right) a_{n_0} + \left( \alpha + \frac{1}{\alpha} -2 \right) (s_0 + \cdots + s_{n_0 -1})
\]
is bounded above by a constant not depending on $t= a_{n_0}$, hence not on the choice of $N$. But we know
\[
a_{n+2} - \alpha \cdot a_{n+1} =  \frac{1}{\alpha} \cdot ( a_{n+1} - \alpha \cdot a_n)
\]
for $n \geq n_0$, so $a_{N} - \alpha \cdot a_{N-1} \leq 6$ for sufficiently large $N$. Therefore
\[
\begin{aligned}
(c-6) & \cdot |V(B_{N-1})|  = (c-6) (s_0 + s_1 + \cdots + s_{N-1}) \\
& \leq (c-6) (s_0 + \cdots + s_{n_0-1} + a_{n_0} + a_{n_0 +1} + \cdots + a_{N-1}) \\
                         & = a_N - a_{N-1} -6 \leq a_N - \frac{a_N}{\alpha} + \frac{6}{\alpha} - 6 \leq \left( 1 - \frac{1}{\alpha} \right) \cdot |V(S_N)|.
\end{aligned}
\]
Because $\alpha + 1/\alpha = c-4$, this inequality implies $(\alpha -1 )\cdot |V(B_{N-1}) | \leq |V(S_N)|$ for sufficiently large $N$.
Now computation similar to \eqref{E:ball-sphere} and \eqref{E:growth-iso}  proves the theorem. We leave the details to readers.
\end{proof}

\section{Isoperimetric constants on locally tessellating plane graphs}\label{S:last}
As we mentioned in the introduction, Theorem~\ref{T1} is still valid even when the graph $G$ is a local tessellation of the plane; i.e., even when $G$ has some infinigons. 
In fact, if a finite graph $S \subset G$ is given, then it definitely cannot contain infinigons in its face set, hence the computation in Sections~\ref{S:CGB}--\ref{ProofT1} is still valid and we have
the same conclusions for $\imath^* (G)$ and $\imath_\sigma^* (G)$. For the isoperimetric constants $\imath(G)$ and $\imath_\sigma (G)$, there are several ways to overcome the
hurdle, and one of them is to construct a tessellation $G'$ that includes a subgraph $S'$ which is isomorphic to $S$. Note that such $G'$ always exists. But perhaps a better way is to compute
isoperimetric constants directly. First, we observe that the inequality \eqref{E2} essentially says that
\begin{equation}\label{E:final}
|V(b S)|+ q  + \frac{pq - 2p -2q}{2}\cdot |V(S)|\leq \frac{q-2}{2} \cdot |\partial S|
\end{equation}
for every \emph{induced} graph $S \subset G$, where $G$ satisfies the assumptions in Theorem~\ref{T1}.  Then by \eqref{inequal-2} we have 
\begin{equation}\label{E:facebound}
|V(S)| \leq  \frac{\alpha}{\alpha-1} \cdot |V(b S)|,
\end{equation}
hence \eqref{E:final} implies
\[
\frac{\alpha-1}{\alpha} \cdot |V(S)| + \frac{1}{2} \left( \alpha + \frac{1}{\alpha} -2 \right) |V(S)| \leq \frac{q-2}{2} \cdot |\partial S|,
\]
or
\[
\frac{|\partial S|}{|V(S)|}\geq  \frac{1}{q-2} \left( \alpha -\frac{1}{\alpha} \right) = (p-2) \sqrt{1 - \frac{4}{(p-2)(q-2)}}.
\]
This gives the lower bound for $\imath(G)$. The computation for the lower bound of $\imath_\sigma (G)$ is a little bit complicated. Note that for simply connected $S$ we have
$\sum_{v \in V(S)} \deg v = 2 |E(S)| + |\partial S|$. Thus  computation similar to \eqref{E:inequal-1} yields
\[
\frac{p-2}{2p} \sum_{v \in V(S)} \deg v  \leq \frac{1}{2} \cdot |\partial S| + |F(S)|.
\]
But \eqref{E:inequal-1} and \eqref{E:facebound} imply 
\[
\frac{q-2}{2} \cdot |F(S)| \leq \frac{1}{2} \left( 1 + \frac{1}{\alpha} \right)  |V(S)| \leq \frac{1}{2p} \left( 1 + \frac{1}{\alpha} \right)  \sum_{v \in V(S)} \deg v,
\]
and the last two inequalities give the lower bound of $\imath_\sigma (G)$.

The case $q = \infty$ is also allowed in Theorem~\ref{T2}. In this case there is no assumption about face degrees, and the constants $\imath^*(G)$ and $\imath^*_\sigma (G)$
look meaningless. For $\imath(G)$ and $\imath_\sigma(G)$, one can immediately prove a corresponding statement by comparing $G$ with the $p$-regular tree, and  
interpreting $\Phi(p, \infty)$ appropriately such as $\Phi(p, \infty) = p-2$.

\section*{Acknowledgement}
This research was supported by Basic Science Research Program through the National Research Foundation of Korea(NRF) funded by the Ministry of Education(NRF-2017R1D1A1B03034665).
 
\bibliographystyle{plain}

\bibliography{iso}

\end{document}